\renewcommand{\leq}{\leqslant}
\renewcommand{\geq}{\geqslant}
\newcommand{\fr}{\penalty-20\null\hfill\(\blacksquare\)}
\newcommand{\nchi}{{\raise.3ex\hbox{\(\chi\)}}}
\newcommand{\N}{\mathbb{N}}
\newcommand{\R}{\mathbb{R}}
\renewcommand{\d}{{\mathrm d}}
\newcommand{\X}{{\rm X}}
\newcommand{\E}{{\rm E}}
\newcommand{\Lip}{{\rm Lip}}
\newcommand{\sfd}{{\sf d}}
\newcommand{\sfD}{{\sf D}}
\newcommand{\average}{{\mathchoice {\kern1ex\vcenter{\hrule height.4pt
				width 6pt
				depth0pt} \kern-9.7pt} {\kern1ex\vcenter{\hrule height.4pt width 4.3pt
				depth0pt}
			\kern-7pt} {} {} }}
\newtheorem{theorem}{Theorem}[section]
\newtheorem{proposition}[theorem]{Proposition}
\newtheorem{lemma}[theorem]{Lemma}
\newtheorem{corollary}[theorem]{Corollary}
\theoremstyle{definition}
\newtheorem{remark}[theorem]{Remark}
\newtheorem{definition}[theorem]{Definition} 
\theoremstyle{remark}
\numberwithin{equation}{section}
\subjclass{49J45, 53C23, 53C60, 51K05}
\keywords{Length distance, Gamma-convergence, variational problem, length functional}
\begin{document}
\title[Variational problems concerning length distances in metric spaces]{Variational problems concerning \\ length distances in metric spaces}

\author[F. Essebei]{Fares Essebei}
\address[F. Essebei]{Dipartimento di Ingegneria Industriale e Scienze Matematiche, Universit\`a Politecnica delle Marche, Via Brecce Bianche, 60131 Ancona (Italy).}
\email{f.essebei@staff.univpm.it}

\author[E. Pasqualetto]{Enrico Pasqualetto}
\address[E. Pasqualetto]{Department of Mathematics and Statistics, P.O.\ Box 35 (MaD), FI-40014 University of Jyvaskyla, (Finland).}
\email{enrico.e.pasqualetto@jyu.fi}

\bibliographystyle{abbrv} 
\date{\today}
\maketitle
\begin{abstract}
Given a locally compact, complete metric space \(({\rm X},{\sf D})\) and an open set \(\Omega\subseteq{\rm X}\), we study the class of length distances \(\sf d\) on \(\Omega\)
that are bounded from above and below by fixed multiples of the ambient distance \(\sf D\). More precisely, we prove that the uniform convergence on compact sets of
distances in this class is equivalent to the \(\Gamma\)-convergence of several associated variational problems. Along the way, we fix some oversights appearing in the previous literature.
\end{abstract}
\section{Introduction}
\subsection*{General overview}
The convergence of several different variational functionals associated to length distances has been extensively studied in the literature.
Among others, it has been relevant e.g.\ to obtain homogenisation-type results for Riemannian and Finsler metrics \cite{AB,AV}, to study mass transport and optimisation problems \cite{BraButFra},
and to establish approximation results concerning Euclidean metrics and distances \cite{Smooth}. A key result is due to Buttazzo--De Pascale--Fragal\`{a} \cite{BDF}, where they
prove that the convergence of length distances on an open subset of the Euclidean space (that are controlled from above and below by fixed multiples of the Euclidean distance)
can be characterised in terms of the \(\Gamma\)-convergence of suitable length and energy functionals. Recently, we generalised this result to the sub-Riemannian setting \cite{esspasq}.
More specifically, we investigated the above-mentioned variational problems concerning distances in the case where the ambient space is a Carnot group endowed with a fixed sub-Riemannian
(or, more generally, sub-Finsler) distance.
\medskip

The aim of this paper is to extend the results discussed above to the much more general setting of (locally compact, complete) metric spaces. This shows that -- once the various objects
under consideration are defined in a suitable manner -- the principle behind the equivalence result is very robust and has little to do with the structure of the underlying ambient space.
Along the way, we also propose a solution to some issues we have found in the existing literature on the topic.
\medskip

The rest of the introduction is subdivided as follows: first we describe more in details our main results, then we compare them with the previous works in this field.
\subsection*{Statement of results}
Let \((\X,\sfD)\) be a locally compact, complete metric space, \(\Omega\subseteq\X\) an open set, and \(\alpha>1\) a given constant. Our main object of study is the class \(\mathcal D_\alpha(\Omega)\)
of all length distances on \(\Omega\) verifying \(\alpha^{-1}\sfD\leq\sfd\leq\alpha\sfD\), cf.\ with Definition \ref{classD}. We point out that the continuous extension \(\bar\sfd\) of \(\sfd\) to the
closure \(\overline{\Omega\times\Omega}\) is a geodesic distance on \(\bar\Omega\) verifying \(\alpha^{-1}\sfD\leq\bar\sfd\leq\alpha\sfD\), see Lemma \ref{lem:closure_geod}.
\medskip

We will consider three different functionals associated with any given distance \(\sfd\in\mathcal D_\alpha(\Omega)\):
\begin{itemize}
\item The length functional \(L_{\bar\sfd}\), which is defined on the space of Lipschitz curves from \([0,1]\) to the closure \(\bar\Omega\),
the latter being equipped with the topology of uniform convergence; see \eqref{lengthh}.
\item The `optimal-transport-type' functional \(J_\sfd\), which is defined as
\[
J_\sfd(\mu)\coloneqq\int\sfd(x,y)\,\d\mu(x,y)\quad\text{ for every finite Borel measure }\mu\geq 0\text{ on }\Omega\times\Omega;
\]
see \eqref{energy}. The domain of \(J_\sfd\) is equipped with the weak\(^*\) topology.
\item The `convex-minimisation-type' functional \(F_\sfd\), which is defined as
\[
F_\sfd(u)\coloneqq\left\{\begin{array}{ll}
0\\
+\infty
\end{array}\quad\begin{array}{ll}
\text{ if }u\text{ is }1\text{-Lipschitz with respect to }\sfd,\\
\text{ otherwise}
\end{array}\right.
\]
for every Lipschitz function \(u\colon\X\to\R\); see \eqref{F_d}. The domain of \(F_\sfd\) is equipped with the topology of uniform convergence on compact sets.
\end{itemize}
Our main result (i.e.\ Theorem \ref{thm:main_Gamma-conv}) states that if \((\sfd_n)_{n\in\N}\subseteq\mathcal D_\alpha(\Omega)\) and \(\sfd\in\mathcal D_\alpha(\Omega)\) are given, then
\[
\underset{\rm(A)}{\sfd_n\to\sfd}\qquad\Longleftrightarrow\qquad\underset{\rm(B)}{J_{\sfd_n}\overset{\Gamma}{\to}J_\sfd}\qquad
\Longleftrightarrow\qquad\underset{\rm(C)}{L_{\bar \sfd_n}\overset{\Gamma}{\to}L_{\bar\sfd}}\qquad\Longleftrightarrow\qquad\underset{\rm(D)}{F_{\sfd_n}\overset{\Gamma}{\to}F_\sfd},
\]
where the notation \(\overset{\Gamma}{\to}\) is used to indicate the \(\Gamma\)-convergence of the respective functionals. It is worth pointing out that in condition \(\rm(C)\)
it is necessary to require the \(\Gamma\)-convergence of the length functionals associated with the extended distances \(\bar\sfd_n\); we comment on this in the next paragraph.
\subsection*{Comparison with previous works}
Unfortunately, in the Euclidean version \cite[Theorem 3.1]{BDF} of the above equivalence result it is erroneously stated that \(\rm(A)\) is equivalent to the \(\Gamma\)-convergence
of the length functionals \(L_{\sfd_n}\) to \(L_\sfd\); here we are considering the original distances \(\sfd_n\) and \(\sfd\) defined on the open set \(\Omega\). This mistake
propagated to subsequent works, e.g.\ to our paper \cite{esspasq}; see Theorem 4.4 therein. This is the reason why we noticed this issue (regretfully, after the publication of
our paper). It holds true that \(\rm(A)\) implies that \(L_{\sfd_n}\) is \(\Gamma\)-converging to \(L_\sfd\), see Proposition
\ref{prop:converse_fails}, but the converse implication might fail. Indeed, in Lemma \ref{lem:counterex} we consider a square
\(Q\) in the plane and we construct a sequence of distances \((\sfd_n)_{n\in\N}\subseteq\mathcal D_2(Q)\) such that
\(L_{\sfd_n}\) \(\Gamma\)-converges to \(L_\sfd\) for some \(\sfd\in\mathcal D_2(Q)\), but for which \(\sfd_n\)
\emph{does not converge} to \(\sfd\); what happens is that \(\sfd_n\) converges to some smaller distance \(\tilde\sfd\),
which is not a length distance. In fact, a similar counterexample can be built on every open subset of \(\R^n\) with
\(n\geq 2\), see Remark \ref{rmk:ex_for_all_open}. This phenomenon is due to the fact that -- differently from what is
stated in the proofs of \cite[Theorem 3.1]{BDF} and in \cite[Corollary 4.3]{esspasq} -- the space \(\mathcal D_\alpha(\Omega)\)
\emph{is not compact} (in general). Technically speaking, the error in the proof of the compactness of \(\mathcal D_\alpha(\Omega)\)
is the following: one cannot deduce the convergence of the infima of the auxiliary functionals (e.g.\ \(L_{\sfd_n}^{x,y}\) in the
proof of \cite[Corollary 4.3]{esspasq}) from their \(\Gamma\)-convergence, since these functionals need not be equi-coercive.
As the equivalence between \(\rm(A)\) and \(\rm(C)\) illustrates, a possible way to fix the problem is to consider the length
functionals on the closure of \(\Omega\) instead.
\medskip

Since the proofs of (the analogues of) the implications \({\rm(B)}\Longrightarrow{\rm(A)}\) and
\({\rm(D)}\Longrightarrow{\rm(A)}\) were based on the compactness of \(\mathcal D_\alpha(\Omega)\),
new proof arguments are needed here. To the best of our knowledge, the error we discussed above
has not been previously observed. However, we mention that for instance Briani--Davini \cite{BriDa}
show that the class of geodesic Finsler distances on the closed set \(\bar\Omega\) is a compact metrisable space.
We also point out that the erratum \cite{BDFerr} is concerned with a different issue, which does not occur in the
present paper, thanks to the definition of \(F_\sfd\) we chose, which -- in the general metric setting -- seems
to be the only possible reasonable axiomatisation.
\bigskip

\noindent\textbf{Acknowledgements.}
The authors thank Tapio Rajala for several useful discussions on the topics of this paper.

\section{Preliminaries}
Let \((\E_1,\sfd_1)\) and \((\E_2,\sfd_2)\) be metric spaces. Then we denote by \(C(\E_1;\E_2)\) and \(\Lip(\E_1;\E_2)\) the spaces
of all continuous and Lipschitz maps, respectively, from \(\E_1\) to \(\E_2\). We endow both \(C(\E_1;\E_2)\) and
\(\Lip(\E_1;\E_2)\) with the topology of uniform convergence on compact sets. When \((\E_2,\sfd_2)\) is the real line \(\R\) together
with the Euclidean distance, we shorten \(C(\E_1;\R)\) and \(\Lip(\E_1;\R)\) as \(C(\E_1)\) and \(\Lip(\E_1)\), respectively.
Given any \(u\in\Lip(\E_1)\), we denote by \(\Lip_{\sfd_1}(u)\) its Lipschitz constant.
\begin{remark}\label{rmk:repr_dist}{\rm
Given any metric space \((\E,\sfd)\), it holds that
\[
\sfd(x,y)=\sup\big\{u(x)-u(y)\;\big|\;u\in\Lip(\E),\,\Lip_\sfd(u)\leq 1\big\}\quad\text{ for every }x,y\in\E.
\]
Indeed, the inequality \(\geq\) is trivial, while the inequality \(\leq\) follows by choosing \(u\coloneqq\sfd(\cdot,y)\).
\fr}\end{remark}
\subsection*{Some topological facts}
A topological space \(({\rm T},\tau)\) is said to be \emph{hemicompact} if there exists a sequence \((K_n)_{n\in\N}\) of compact subsets
of \({\rm T}\) having the following property: given any compact subset \(K\) of \({\rm T}\), there exists \(n\in\N\) such that \(K\subseteq K_n\).
In particular, since all singletons are compact, it holds that \({\rm T}=\bigcup_{n\in\N}K_n\).
\begin{lemma}\label{lem:about_hemicpt}
Every locally compact, connected metric space is hemicompact.
\end{lemma}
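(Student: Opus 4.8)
The statement to prove is that every locally compact, connected metric space $(\X,\sfd)$ is hemicompact. The natural strategy is to build the exhausting sequence $(K_n)$ explicitly from closed balls and then exploit connectedness to show it exhausts every compact set.

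The plan is to produce a sequence $(K_n)_{n\in\N}$ of compact subsets of $\X$ with $K_n\subseteq\intt(K_{n+1})$ for all $n$ and $\bigcup_{n\in\N}K_n=\X$; such a sequence automatically witnesses hemicompactness, since any compact $K\subseteq\X$ is then covered by the open sets $\{\intt(K_{n+1})\}_n$, hence by finitely many of them, hence --- the $K_n$ being increasing --- by a single $\intt(K_{N+1})\subseteq K_{N+1}$. (Equivalently, one is showing that a connected, locally compact metric space is $\sigma$-compact and then running the usual exhaustion argument; here I do both at once.)

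The first step is to quantify local compactness through the function $\rho\colon\X\to(0,+\infty]$, $\rho(x)\coloneqq\sup\big\{r>0:\text{the closed ball }\bar B(x,r)\coloneqq\{z:\sfd(z,x)\le r\}\text{ is compact}\big\}$, which is positive at every point because a closed subset of a compact set is compact. Since $\bar B(y,r-\sfd(x,y))$ is a closed subset of $\bar B(x,r)$, the function $\rho$ is either identically $+\infty$ or finite and $1$-Lipschitz; in either case $r(x)\coloneqq\min\{\rho(x)/2,1\}$ defines a continuous map $r\colon\X\to(0,1]$ with $r(x)<\rho(x)$, so that each $\bar B_x\coloneqq\bar B(x,r(x))$ is compact. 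Fixing $x_0\in\X$, I set $K_1\coloneqq\bar B_{x_0}$ and, recursively, $K_{n+1}\coloneqq\bigcup_{x\in K_n}\bar B_x$.

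Then there are three points to check. (i) Each $K_n$ is compact: by induction, if $(y_j)_j\subseteq K_{n+1}$ with $y_j\in\bar B_{x_j}$ and $x_j\in K_n$, pass to a subsequence with $x_j\to x\in K_n$; since $\sfd(x,y_j)\le\sfd(x,x_j)+r(x_j)\to r(x)$, eventually $y_j\in\bar B(x,r(x)+\eta)$ for a fixed small $\eta$ with $r(x)+\eta<\rho(x)$, so a further subsequence converges, with limit in $\bar B_x\subseteq K_{n+1}$ by the same bound; thus $K_{n+1}$ is sequentially compact. (ii) $K_n\subseteq\intt(K_{n+1})$, because $K_n\subseteq\bigcup_{x\in K_n}B(x,r(x))\subseteq\intt(K_{n+1})$. (iii) The set $G\coloneqq\bigcup_{n\in\N}K_n$ is open, being $\bigcup_n\intt(K_{n+1})$ by (ii), and it is also \emph{closed}: given $y\in\bar G$, the Lipschitz bound on $\rho$ lets one choose $z\in G$ with $\sfd(y,z)$ small enough that $\sfd(y,z)<r(z)$ --- e.g.\ $\sfd(y,z)<r(y)/3$ forces $\rho(z)>\tfrac53 r(y)$ and $\sfd(y,z)<1$, hence $\sfd(y,z)<r(z)$ --- and then $y\in\bar B_z\subseteq K_{n+1}\subseteq G$, where $n$ is chosen so that $z\in K_n$. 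Since $\X$ is connected and $G$ is nonempty and clopen, $G=\X$, so $(K_n)_n$ is the required exhausting sequence.

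I expect the main obstacle to be the closedness of $G$ in step (iii), and it is there that the specific construction matters: one cannot merely ``take a compact neighbourhood of each point'', since in a locally compact metric space closed balls need not be compact, so the exhaustion has to be generated by a single rule $r(\cdot)$ that varies continuously with the base point --- this continuity is exactly what forces points near a $z\in K_n$ into $\bar B_z$ --- and this is the step that genuinely uses the connectedness hypothesis.
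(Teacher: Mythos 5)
Your proof is correct, but it takes a genuinely different route from the paper's. The paper outsources the main work to the literature: it invokes Stone's theorem that metric spaces are paracompact and the result cited from Spivak's appendix that a locally compact, connected, paracompact space is \(\sigma\)-compact, and then upgrades \(\sigma\)-compactness to hemicompactness by a short covering argument (enlarge each \(K_n\) to a finite union of compact sets whose interiors cover it, and take all finite unions of the resulting countable family). You instead build an exhaustion from scratch: the radius-of-compactness function \(\rho(x)=\sup\{r>0:\bar B(x,r)\text{ is compact}\}\) is either identically \(+\infty\) or \(1\)-Lipschitz, the truncation \(r(x)=\min\{\rho(x)/2,1\}\) is continuous and stays strictly below \(\rho\), and the recursively defined sets \(K_{n+1}=\bigcup_{x\in K_n}\bar B(x,r(x))\) are compact, satisfy \(K_n\subseteq\intt(K_{n+1})\), and have clopen union, which by connectedness is all of \(\X\); hemicompactness follows since any compact set is covered by finitely many of the increasing open sets \(\intt(K_{n+1})\). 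Your key steps check out: the bound \(\sfd(x,y_j)\le\sfd(x,x_j)+r(x_j)\to r(x)\) together with compactness of \(\bar B(x,r(x)+\eta)\) for small \(\eta\) gives sequential compactness of \(K_{n+1}\), and the estimates \(\rho(z)\ge\rho(y)-\sfd(y,z)\) and \(\rho(y)\ge 2r(y)\) indeed force \(\sfd(y,z)<r(z)\) whenever \(\sfd(y,z)<r(y)/3\), which is exactly what closes the clopen argument. As for what each approach buys: the paper's proof is very short but rests on nontrivial external results, whereas yours is self-contained and elementary, yields slightly more (a compact exhaustion with each term contained in the interior of the next, not merely a cofinal countable family of compacta), and makes explicit where connectedness enters, a hypothesis that in the paper's argument is hidden inside the cited \(\sigma\)-compactness statement.
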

\begin{proof}
Let \((\E,\sfd)\) be a locally compact, connected metric space. Since metric spaces are paracompact
(see \cite{Stone}), we know from \cite[Appendix A]{Spivak}
that \((\E,\sfd)\) is \(\sigma\)-compact. Namely, there exists a sequence \((K_n)_{n\in\N}\) of compact subsets of \(\E\)
such that \(\E=\bigcup_{n\in\N}K_n\). Given any \(n\in\N\), we can find a finite family \(\mathcal F_n\) of compact subsets
of \(\E\) such that \(K_n\) is contained in the interior of \(\bigcup_{H\in\mathcal F_n}H\). It is then easy to check that
the set \(\mathcal C\) of all finite unions of elements of \(\bigcup_{n\in\N}\mathcal F_n\) is a countable family of
compact subsets of \(\E\) satisfying the following property: given any \(K\subseteq\E\) compact, there exists
\(H\in\mathcal C\) such that \(K\subseteq H\). This proves that \((\E,\sfd)\) is hemicompact, as required.
\end{proof}
\subsection*{The length functional \texorpdfstring{\(L_\sfd\)}{Ld}}
Given a metric space \((\E,\sfd)\) and a continuous curve \(\gamma\colon[a,b]\to\E\), we define the \emph{length} of \(\gamma\) as
\begin{equation}\label{lengthh}
L_\sfd(\gamma)\coloneqq\sup\bigg\{\sum_{i=1}^k\sfd(\gamma_{t_i},\gamma_{t_{i-1}})\;\bigg|\;k\in\N,\,a=t_0<t_1<\ldots<t_k=b\bigg\}.
\end{equation}
If \(L_\sfd(\gamma)<+\infty\), then we say that \(\gamma\) is \emph{rectifiable}. Notice that every Lipschitz curve is rectifiable.
\begin{remark}\label{rmk:L_d_lsc}{\rm
The length functional \(L_\sfd\colon C([0,1];\E)\to[0,+\infty]\) is lower semicontinuous. Indeed, it can expressed as the supremum
of the continuous functionals \(C([0,1];\E)\ni\gamma\mapsto\sum_{i=1}^k\sfd(\gamma_{t_i},\gamma_{t_{i-1}})\), where \((t_i)_{i=0}^k\)
varies in the family of all finite partitions of the interval \([0,1]\).
\fr}\end{remark}

A metric space \((\E,\sfd)\) is said to be a \emph{length space} provided it holds that
\[
\sfd(x,y)=\inf\Big\{L_\sfd(\gamma)\;\Big|\;\gamma\in\Lip([0,1];\E),\,\gamma_0=x,\,\gamma_1=y\Big\}\quad\text{ for every }x,y\in\E.
\]
We say that a rectifiable curve \(\gamma\colon[a,b]\to\E\) has \emph{constant-speed} provided it satisfies
\[
L_\sfd(\gamma|_{[t,s]})=\frac{L_\sfd(\gamma)}{|a-b|}|t-s|\quad\text{ for every }t,s\in[a,b]\text{ with }t<s.
\]
Each rectifiable curve \(\gamma\colon[a,b]\to\E\) admits a \emph{constant-speed reparameterisation}
\(\hat\gamma\colon[0,1]\to\E\), i.e.\ \(\hat\gamma\) is a rectifiable curve with
\((\hat\gamma_0,\hat\gamma_1)=(\gamma_a,\gamma_b)\), having constant-speed and the same image of \(\gamma\),
and satisfying \(L_\sfd(\hat\gamma)=L_\sfd(\gamma)\). A curve \(\gamma\colon[a,b]\to\E\) is a \emph{geodesic}
if it has constant-speed and
\[
L_\sfd(\gamma|_{[t,s]})=\sfd(\gamma_t,\gamma_s)\quad\text{ for every }t,s\in[a,b]\text{ with }t<s.
\]
We say that \((\E,\sfd)\) is a \emph{geodesic space} if for every \(x,y\in\E\)
there exists a geodesic \(\gamma\colon[0,1]\to\E\) such that \(\gamma_0=x\) and \(\gamma_1=y\). Every geodesic space is a length space.
The converse implication, which is in general false, holds for example when the metric space is locally compact and complete.
\subsection*{The class \texorpdfstring{\(\mathcal D_\alpha(\Omega)\)}{Dalpha(Omega)}}
Next we introduce the class of distances we are concerned with in this paper.
\begin{definition}\label{classD}
Let \((\X,\sfD)\) be a metric space and let \(\alpha>1\). Then we denote by \(\mathcal D_\alpha(\Omega,\sfD)\),
or by \(\mathcal D_\alpha(\Omega)\) for brevity, the set of all those distances \(\sfd\) on \(\Omega\) such that
\((\Omega,\sfd)\) is a length space and
\[
\frac{1}{\alpha}\sfD(x,y)\leq\sfd(x,y)\leq\alpha\sfD(x,y)\quad\text{ for every }x,y\in\Omega.
\]
We endow \(\mathcal D_\alpha(\Omega)\) with the topology of uniform convergence on compact subsets of \(\Omega\times\Omega\).
\end{definition}

The class \(\mathcal D_\alpha(\Omega)\) might be empty, for example if \(\Omega\) disconnected.  Given any \(\sfd\in\mathcal D_\alpha(\Omega)\), one has that
\(\Lip_\sfd([0,1];\Omega)=\Lip_\sfD([0,1];\Omega)\) and \(\alpha^{-1}L_\sfD(\gamma)\leq L_\sfd(\gamma)\leq\alpha L_\sfD(\gamma)\) for every \(\gamma\in\Lip([0,1];\Omega)\).
\begin{lemma}\label{lem:cor_Ascoli-Arzela}
Let \((\E,\sfD)\) be a hemicompact metric space and let \(\alpha>1\). Let \((\sfd_n)_{n\in\N}\) be given distances on \(\E\) such that \(\alpha^{-1}\sfD\leq\sfd_n\leq\alpha\sfD\)
for every \(n\in\N\). Then we can extract a subsequence \((n_i)_{i\in\N}\) such that \(\sfd_{n_i}\to\sfd\) uniformly on compact subsets of \(\E\times\E\), for some limit distance \(\sfd\) on \(\E\)
satisfying the inequalities \(\alpha^{-1}\sfD\leq\sfd\leq\alpha\sfD\).
\end{lemma}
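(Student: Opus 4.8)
The plan is to use an Arzel\`a--Ascoli-type compactness argument on the family of functions $(\sfd_n)_{n\in\N}$, viewed as real-valued functions on the metric space $\E\times\E$. First I would observe that each $\sfd_n\colon\E\times\E\to[0,+\infty)$ is $2\alpha$-Lipschitz: indeed, for points $(x,y),(x',y')\in\E\times\E$, the triangle inequality gives $|\sfd_n(x,y)-\sfd_n(x',y')|\leq\sfd_n(x,x')+\sfd_n(y,y')\leq\alpha\big(\sfD(x,x')+\sfD(y,y')\big)$, so the family is equi-Lipschitz (hence equicontinuous) with respect to any fixed product metric on $\E\times\E$. Moreover, fixing a basepoint $o\in\E$, for any $(x,y)$ we have $\sfd_n(x,y)\leq\alpha\sfD(x,y)\leq\alpha\big(\sfD(x,o)+\sfD(o,y)\big)$, which is a bound independent of $n$; thus the family is pointwise bounded, and in fact uniformly bounded on any fixed compact subset of $\E\times\E$.

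Next I would exploit hemicompactness: let $(K_m)_{m\in\N}$ be an exhausting sequence of compact subsets of $\E$ as in the definition of hemicompactness, so that every compact subset of $\E$ is contained in some $K_m$; then $(K_m\times K_m)_{m\in\N}$ is an exhausting sequence of compact subsets of $\E\times\E$, and in particular $\E\times\E=\bigcup_m K_m\times K_m$. On each compact set $K_m\times K_m$ the classical Arzel\`a--Ascoli theorem applies to the equicontinuous, uniformly bounded family $(\sfd_n)_n$, yielding a uniformly convergent subsequence. Running a diagonal argument over $m$ produces a single subsequence $(n_i)_{i\in\N}$ such that $\sfd_{n_i}$ converges uniformly on each $K_m\times K_m$; since every compact subset of $\E\times\E$ is contained in some $K_m\times K_m$, this means $\sfd_{n_i}\to\sfd$ uniformly on compact subsets of $\E\times\E$ for a well-defined limit function $\sfd\colon\E\times\E\to[0,+\infty)$ (defined pointwise as the limit, which exists everywhere since singletons are compact).

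It then remains to verify that $\sfd$ is a distance satisfying $\alpha^{-1}\sfD\leq\sfd\leq\alpha\sfD$. The two-sided bound passes to the pointwise limit immediately. Symmetry and the triangle inequality for $\sfd$ follow by taking limits in the corresponding identities/inequalities for $\sfd_{n_i}$. Finally, $\sfd(x,y)=0\iff x=y$: one direction is clear since $\sfd(x,x)=\lim_i\sfd_{n_i}(x,x)=0$, and the other follows from $\sfd(x,y)\geq\alpha^{-1}\sfD(x,y)>0$ whenever $x\neq y$. This completes the proof. I do not expect any serious obstacle here; the only mild care needed is in setting up the diagonal extraction cleanly and in checking that the product exhaustion $(K_m\times K_m)_m$ indeed absorbs every compact subset of $\E\times\E$ (which holds because the projections of a compact set are compact, hence each contained in some $K_m$, and then one enlarges to a common index). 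Note we do not claim $\sfd$ is a length distance or lies in $\mathcal D_\alpha$ — only the two-sided bound is asserted, consistent with the counterexample phenomenon discussed in the introduction.
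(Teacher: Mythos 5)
Your proof is correct and follows essentially the same route as the paper: the equi-Lipschitz estimate \(|\sfd_n(x,y)-\sfd_n(x',y')|\leq\alpha\big(\sfD(x,x')+\sfD(y,y')\big)\), Arzel\`a--Ascoli plus a diagonal argument over the hemicompact exhaustion \((K_m\times K_m)_m\), the projection argument showing these sets absorb every compact subset of \(\E\times\E\), and passage to the limit of the distance axioms together with the two-sided bound (the lower bound giving nondegeneracy). No gaps.
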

\begin{proof}
Fix any increasing sequence \((K_j)_{j\in\N}\) of compact subsets of \(\E\) having the property that any compact subset of \(\E\)
is contained in \(K_j\) for some \(j\in\N\). Notice that for any \(n\in\N\) we can estimate
\[
\big|\sfd_n(x,y)-\sfd_n(\tilde x,\tilde y)\big|\leq\sfd_n(x,\tilde x)+\sfd_n(y,\tilde y)
\leq\alpha\big(\sfD(x,\tilde x)+\sfD(y,\tilde y)\big)\quad\text{ for every }x,\tilde x,y,\tilde y\in\E.
\]
This shows that each function \(\sfd_n\colon\E\times\E\to[0,+\infty)\) is Lipschitz if its domain is endowed with the
distance \((\sfD\times_1\sfD)\big((x,y),(\tilde x,\tilde y)\big)\coloneqq\sfD(x,\tilde x)+\sfD(y,\tilde y)\) and
that \(\Lip_{\sfD\times_1\sfD}(\sfd_n)\leq\alpha\). Therefore, thanks to the Arzel\`{a}--Ascoli theorem and a
diagonal argument, we can extract a subsequence \((n_i)_{i\in\N}\) such that \(\sfd_{n_i}\to\sfd\) uniformly on
each set \(K_j\times K_j\), for some function \(\sfd\colon\E\times\E\to[0,+\infty)\). By approximation, one can readily check
that \(\sfd\) is a pseudodistance on \(\E\) and that \(\alpha^{-1}\sfD\leq\sfd\leq\alpha\sfD\). The bound
\(\alpha^{-1}\sfD\leq\sfd\) implies that \(\sfd\) is a distance. Finally, given a compact subset \(H\) of \(\E\times\E\),
we have that \(H\subseteq\pi_1(H)\times\pi_2(H)\), where the maps \(\pi_1,\pi_2\colon\E\times\E\to\E\) are defined as
\(\pi_1(x,y)\coloneqq x\) and \(\pi_2(x,y)\coloneqq y\). Since \(\pi_1\) and \(\pi_2\) are continuous, we have
that \(\pi_1(H)\) and \(\pi_2(H)\) are compact, so that \(\pi_1(H),\pi_2(H)\subseteq K_j\) for some \(j\in\N\)
and accordingly \(H\subseteq K_j\times K_j\). This implies that \(\sfd_{n_i}\to\sfd\) uniformly on compact
subsets of \(\E\times\E\). Consequently, the proof of the statement is achieved.
\end{proof}
\begin{definition}
Let \((\X,\sfD)\) be a metric space. Let \(\Omega\subseteq\X\) be an open set. Let \(\sfd\) be a distance on \(\Omega\) such that \(\alpha^{-1}\sfD\leq\sfd\leq\alpha\sfD\)
on \(\Omega\times\Omega\) for some \(\alpha>1\). Then we denote by \(\bar\sfd\colon\bar\Omega\times\bar\Omega\to[0,+\infty)\) the unique continuous extension of the
function \(\sfd\) to \(\overline{\Omega\times\Omega}=\bar\Omega\times\bar\Omega\).
\end{definition}

It can be readily checked that \(\bar\sfd\) is a distance on \(\bar\Omega\) satisfying \(\alpha^{-1}\sfD\leq\bar\sfd\leq\alpha\sfD\) on \(\bar\Omega\times\bar\Omega\).
\begin{lemma}\label{lem:closure_geod}
Let \((\X,\sfD)\) be a locally compact, complete metric space and let \(\alpha>1\). Let \(\sfd\in\mathcal D_\alpha(\Omega)\) be a given distance.
Then the metric space \((\bar\Omega,\bar\sfd)\) is a geodesic space.
\end{lemma}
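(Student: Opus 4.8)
The plan is to show that $(\bar\Omega,\bar\sfd)$ is geodesic by first establishing that it is a \emph{length} space, and then upgrading to geodesic via the classical Hopf--Rinow-type theorem (every complete, locally compact length space is geodesic), whose hypotheses I would verify directly. Completeness of $(\bar\Omega,\bar\sfd)$ is immediate: $\bar\Omega$ is closed in the complete space $(\X,\sfD)$, hence $(\bar\Omega,\sfD)$ is complete, and since $\alpha^{-1}\sfD\leq\bar\sfd\leq\alpha\sfD$ the distances $\bar\sfd$ and $\sfD$ are bi-Lipschitz equivalent on $\bar\Omega$, so $(\bar\Omega,\bar\sfd)$ is complete as well. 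Local compactness transfers the same way, since bi-Lipschitz equivalent distances induce the same topology and $(\bar\Omega,\sfD)$, being a closed subset of the locally compact space $(\X,\sfD)$, is locally compact.

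The substantive part is showing that $(\bar\Omega,\bar\sfd)$ is a length space, i.e.\ that for all $x,y\in\bar\Omega$ one has $\bar\sfd(x,y)=\inf\{L_{\bar\sfd}(\gamma): \gamma\in\Lip([0,1];\bar\Omega),\ \gamma_0=x,\ \gamma_1=y\}$. The inequality $\leq$ is the triangle-inequality estimate $\bar\sfd(x,y)\leq L_{\bar\sfd}(\gamma)$ valid for any curve joining $x$ to $y$. For the reverse inequality, fix $\eps>0$ and first handle $x,y\in\Omega$: since $(\Omega,\sfd)$ is a length space, there is $\gamma\in\Lip([0,1];\Omega)$ with $\gamma_0=x$, $\gamma_1=y$ and $L_\sfd(\gamma)\leq\sfd(x,y)+\eps=\bar\sfd(x,y)+\eps$; viewing $\gamma$ as a curve into $\bar\Omega$ and noting $L_{\bar\sfd}(\gamma)=L_\sfd(\gamma)$ (the length only sees values of $\bar\sfd$ at pairs of points on the image of $\gamma$, which lie in $\Omega$), we are done in this case. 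For general $x,y\in\bar\Omega$, pick sequences $x_k,y_k\in\Omega$ with $\sfD(x_k,x)\to 0$ and $\sfD(y_k,y)\to 0$; choose curves $\gamma_k\in\Lip([0,1];\Omega)$ from $x_k$ to $y_k$ with $L_{\bar\sfd}(\gamma_k)\leq\bar\sfd(x_k,y_k)+1/k$, prepend a short segment and append another — here is the one point needing a little care: $\Omega$ need not be convex, so I cannot literally use straight segments. Instead, I would either prepend/append curves inside $\Omega$ joining $x_k$ to $x_{k+1}$ etc.\ of controlled length (using the length-space property of $(\Omega,\sfd)$ together with $\sfd(x_k,x_{k+1})\leq\alpha\sfD(x_k,x_{k+1})\to 0$), or — cleaner — argue by a limiting/compactness argument: after reparameterising the $\gamma_k$ to constant speed, their $\sfd$-lengths are bounded (by $\alpha\,\sfD(x_k,y_k)+1$, hence uniformly bounded), so they are uniformly Lipschitz with respect to $\sfD$, take values in a fixed compact subset of $\bar\Omega$ (local compactness plus boundedness, using completeness), and by Arzelà--Ascoli a subsequence converges uniformly to some $\gamma\in\Lip([0,1];\bar\Omega)$ with $\gamma_0=x$, $\gamma_1=y$; lower semicontinuity of $L_{\bar\sfd}$ (Remark \ref{rmk:L_d_lsc}) and continuity of $\bar\sfd$ then give $L_{\bar\sfd}(\gamma)\leq\liminf_k L_{\bar\sfd}(\gamma_k)\leq\liminf_k(\bar\sfd(x_k,y_k)+1/k)=\bar\sfd(x,y)$. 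This shows $(\bar\Omega,\bar\sfd)$ is a length space.

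Finally, apply the fact (stated in the Preliminaries: a locally compact, complete length space is geodesic) to conclude that $(\bar\Omega,\bar\sfd)$ is a geodesic space. The main obstacle is the one flagged above — connecting the approximating curves to the boundary points $x,y$ without convexity of $\Omega$ — and the compactness/lower-semicontinuity route circumvents it cleanly, so I would present that version. A secondary check worth spelling out is that the compact set containing all the $\gamma_k$ can indeed be taken fixed: since $\bar\sfd(x_k,y_k)$ is bounded, all $\gamma_k$ lie in a $\bar\sfd$-ball of fixed radius about $x$, and in a complete, locally compact length space closed bounded balls are compact (again Hopf--Rinow), which supplies the needed compactness.
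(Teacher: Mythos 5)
Your chosen route is essentially the paper's: approximate \(x,y\in\bar\Omega\) by interior points, take almost-minimizing constant-speed curves \(\gamma_k\) in \((\Omega,\sfd)\), extract a uniform limit by Arzel\`a--Ascoli, and use the lower semicontinuity of \(L_{\bar\sfd}\) (Remark \ref{rmk:L_d_lsc}) to get \(L_{\bar\sfd}(\gamma)\le\bar\sfd(x,y)\); the only cosmetic difference is that you package the conclusion as ``\((\bar\Omega,\bar\sfd)\) is a length space, hence geodesic by Hopf--Rinow'', whereas the paper observes directly that the limit curve, after constant-speed reparameterisation, is already a geodesic. However, the one point you yourself flag as needing care is where your write-up, as presented, has a genuine gap: you justify the existence of a fixed compact set containing all the \(\gamma_k\) first by ``local compactness plus boundedness, using completeness'' (false as a general principle: a bounded subset of a locally compact, complete metric space need not be relatively compact) and then by Hopf--Rinow's ball-compactness ``in a complete, locally compact length space''. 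At that stage of your argument the only complete, locally compact space available is \((\bar\Omega,\bar\sfd)\), whose length-space property is precisely what you are in the middle of proving, so the appeal is circular; and it cannot be transferred to \((\X,\sfD)\) or \((\bar\Omega,\sfD)\), since \(\X\) is not assumed to be a length space, so its closed bounded sets need not be compact. (For Arzel\`a--Ascoli with a non-proper target one really does need pointwise relative compactness of \(\{\gamma_k(t)\}_k\), and this is not free: equi-Lipschitz curves in a complete, locally compact space can fail to subconverge.)

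The good news is that the alternative you sketched and then set aside is the clean fix, and it is a legitimately different (and in this respect more careful) argument than the paper's: connect \(x\in\partial\Omega\) to a nearby interior point by concatenating curves in \(\Omega\) between a rapidly \(\sfD\)-converging sequence \(x_k\to x\), each of \(\sfd\)-length at most \(\sfd(x_k,x_{k+1})+2^{-k}\le\alpha\,\sfD(x_k,x_{k+1})+2^{-k}\); the total length is as small as you wish, and completeness guarantees the concatenated curve extends continuously to \(x\) in \(\bar\Omega\). This yields the length-space property of \((\bar\Omega,\bar\sfd)\) with no compactness at all, after which the Hopf--Rinow-type fact quoted in the Preliminaries legitimately applies to the complete, locally compact length space \((\bar\Omega,\bar\sfd)\) and gives geodesics (indeed properness). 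Alternatively, if you want to keep the Arzel\`a--Ascoli version, you must prove the needed relative compactness honestly, e.g.\ by a bootstrap along the parameter interval: the set of \(t\) for which \(\{\gamma_k(s)\}_k\) is relatively compact for all \(s\le t\) is nonempty (it contains small \(t\), since \(\gamma_k(t)\) lies \(\sfd\)-close to \(x_k\to x\) and small closed balls around \(x\) are compact), closed (by equicontinuity and completeness), and open to the right (a compact set has a compact \(r\)-neighbourhood by local compactness, and equi-Lipschitzianity propagates the curves only a distance \(C(s-t)\)), hence equals \([0,1]\). Either repair should be spelled out; as written, the version you chose to present assumes what it sets out to prove.
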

\begin{proof}
Let \(x,y\in\bar\Omega\) be given. Pick sequences \((x_n)_{n\in\N},(y_n)_{n\in\N}\subseteq\Omega\) such that \(x_n\to x\) and \(y_n\to y\). Since \((\Omega,\sfd)\) is a
length space, for any \(n\in\N\) we can find a curve \(\gamma^n\in\Lip([0,1];\Omega)\) such that
\[
L_\sfd(\gamma^n)\leq\sfd(x_n,y_n)+\frac{1}{n},\quad\gamma^n_0=x_n,\quad\gamma^n_1=y_n.
\]
We can assume without loss of generality that each curve \(\gamma^n\) has constant-speed with respect to the distance \(\sfd\), thus
the Lipschitz constant of \(\gamma^n\) is at most \(\sfd(x_n,y_n)+1\). Since \(\sfd(x_n,y_n)\to\bar\sfd(x,y)\) as \(n\to\infty\), we deduce that
\((\gamma^n)_{n\in\N}\) is an equi-Lipschitz family of curves, so that an application of the Arzel\`{a}--Ascoli theorem ensures that \(\gamma^{n_i}\to\gamma\)
uniformly for some subsequence \((n_i)_{i\in\N}\) and some curve \(\gamma\in\Lip([0,1];\bar\Omega)\). Notice that
\(\gamma_0=\lim_{i\to\infty}\gamma^{n_i}_0=\lim_{i\to\infty}x_{n_i}=x\) and similarly \(\gamma_1=y\). Then the lower semicontinuity of the length functional \(L_{\bar\sfd}\)
(recall Remark \ref{rmk:L_d_lsc}) implies that
\[
L_{\bar\sfd}(\gamma)\leq\liminf_{i\to\infty}L_{\bar\sfd}(\gamma^{n_i})=\liminf_{i\to\infty}L_\sfd(\gamma^{n_i})\leq\lim_{i\to\infty}\sfd(x_{n_i},y_{n_i})+\frac{1}{n_i}=\bar\sfd(x,y).
\]
Since also the converse inequality \(\bar\sfd(x,y)\leq L_{\bar\sfd}(\gamma)\) is verified, we conclude that \(L_{\bar\sfd}(\gamma)=\bar\sfd(x,y)\).
Hence, the constant-speed reparameterisation of \(\gamma\) is a geodesic, thus \((\bar\Omega,\bar\sfd)\) is a geodesic space.
\end{proof}
\begin{lemma}\label{lem:conv_on_closure}
Let \((\X,\sfD)\) be a locally compact, complete metric space and let \(\alpha>1\). Let \(\Omega\subseteq\X\) be an open set. Let \((\sfd_n)_{n\in\N}\) be
distances on \(\Omega\) with \(\alpha^{-1}\sfD\leq\sfd_n\leq\alpha\sfD\) on \(\Omega\times\Omega\) and \(\sfd_n\to\sfd\) uniformly on compact sets,
for some distance \(\sfd\) on \(\Omega\). Then \(\bar\sfd_n\to\bar\sfd\) uniformly on compact subsets of \(\bar\Omega\times\bar\Omega\).
\end{lemma}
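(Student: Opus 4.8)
The plan is to upgrade the convergence from $\Omega\times\Omega$ to $\bar\Omega\times\bar\Omega$ by exploiting that the extended distances $\bar\sfd_n$ and $\bar\sfd$ are \emph{equi-Lipschitz} on $\bar\Omega\times\bar\Omega$, so that their behaviour on all of $\bar\Omega\times\bar\Omega$ is controlled by their behaviour on a single fixed finite subset of $\Omega\times\Omega$. First I would observe that, letting $n\to\infty$ in the pointwise inequalities $\alpha^{-1}\sfD\leq\sfd_n\leq\alpha\sfD$ (valid since singletons are compact), the limit $\sfd$ also satisfies $\alpha^{-1}\sfD\leq\sfd\leq\alpha\sfD$ on $\Omega\times\Omega$, so that the extension $\bar\sfd$ is well defined. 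Exactly as in the proof of Lemma \ref{lem:cor_Ascoli-Arzela}, each $\sfd_n$ is $\alpha$-Lipschitz with respect to the distance $(\sfD\times_1\sfD)\big((x,y),(\tilde x,\tilde y)\big)=\sfD(x,\tilde x)+\sfD(y,\tilde y)$ on $\Omega\times\Omega$, and by continuity the same bound $\Lip_{\sfD\times_1\sfD}(\bar\sfd_n)\leq\alpha$ passes to $\bar\Omega\times\bar\Omega$; likewise $\Lip_{\sfD\times_1\sfD}(\bar\sfd)\leq\alpha$. I also recall that $\overline{\Omega\times\Omega}=\bar\Omega\times\bar\Omega$ and that $(\sfD\times_1\sfD)$ induces the product topology.

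Next, fix a compact set $H\subseteq\bar\Omega\times\bar\Omega$ and $\eps>0$, and put $\delta\coloneqq\eps/(4\alpha)$. Using that $\Omega\times\Omega$ is dense in $\bar\Omega\times\bar\Omega$ together with the compactness of $H$, I would cover $H$ by finitely many $(\sfD\times_1\sfD)$-balls of radius $\delta/2$ centred at points of $H$, and then approximate each such centre within $(\sfD\times_1\sfD)$-distance $\delta/2$ by a point of $\Omega\times\Omega$. Collecting the two coordinates of these finitely many points of $\Omega\times\Omega$ yields a finite set $S\subseteq\Omega$ with the property that every $(x,y)\in H$ lies at $(\sfD\times_1\sfD)$-distance at most $\delta$ from some point $(x',y')\in S\times S$.

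Since $S\times S$ is a finite, hence compact, subset of $\Omega\times\Omega$, the hypothesis that $\sfd_n\to\sfd$ uniformly on compact sets provides $N\in\N$ such that $|\sfd_n(x',y')-\sfd(x',y')|<\eps/2$ for all $(x',y')\in S\times S$ and all $n\geq N$. For an arbitrary $(x,y)\in H$ and $n\geq N$, choosing $(x',y')\in S\times S$ as above and inserting it via the triangle inequality, the equi-Lipschitz bounds give
\[
\big|\bar\sfd_n(x,y)-\bar\sfd(x,y)\big|\leq\big|\bar\sfd_n(x,y)-\bar\sfd_n(x',y')\big|+\big|\sfd_n(x',y')-\sfd(x',y')\big|+\big|\bar\sfd(x',y')-\bar\sfd(x,y)\big|\leq\alpha\delta+\tfrac{\eps}{2}+\alpha\delta=\eps.
\]
As $H$ and $\eps$ were arbitrary, this yields $\bar\sfd_n\to\bar\sfd$ uniformly on compact subsets of $\bar\Omega\times\bar\Omega$. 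There is no deep obstacle here: the only point requiring care is the reduction to a finite subset of $\Omega\times\Omega$, for which the equi-Lipschitz continuity of the extensions is essential; note in particular that the local compactness and completeness of $\X$ are not needed for this argument.
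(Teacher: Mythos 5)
Your proof is correct, but it follows a genuinely different route from the paper. The paper proves Lemma \ref{lem:conv_on_closure} by a compactness argument: it notes that \(\bar\Omega\) is locally compact and (using Lemma \ref{lem:closure_geod}, hence the presence of a length distance in \(\mathcal D_\alpha(\Omega)\)) pathwise connected, deduces hemicompactness from Lemma \ref{lem:about_hemicpt}, applies Lemma \ref{lem:cor_Ascoli-Arzela} to extract from every subsequence of \((\bar\sfd_n)_{n\in\N}\) a further subsequence converging uniformly on compact subsets of \(\bar\Omega\times\bar\Omega\) to some distance \(\tilde\sfd\), and then identifies \(\tilde\sfd=\bar\sfd\) since \(\tilde\sfd|_{\Omega\times\Omega}=\sfd\). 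You instead give a direct quantitative argument: the uniform bound \(\Lip_{\sfD\times_1\sfD}(\bar\sfd_n)\leq\alpha\) (and the same for \(\bar\sfd\)) together with the density of \(\Omega\times\Omega\) in \(\bar\Omega\times\bar\Omega\) reduces uniform convergence on a compact \(H\subseteq\bar\Omega\times\bar\Omega\) to convergence on a finite \(\delta\)-net \(S\times S\subseteq\Omega\times\Omega\), and a three-term triangle inequality closes the estimate. Both arguments are sound; yours is more elementary and, as you note, strictly more general: it uses neither local compactness nor completeness of \(\X\), nor the connectedness of \(\bar\Omega\) — which in the paper's proof is obtained from the tacit assumption \(\mathcal D_\alpha(\Omega)\neq\varnothing\), an assumption that does not actually appear among the hypotheses of the lemma (the \(\sfd_n\) there are not required to be length distances). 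So your argument in fact repairs a small gap in the paper's proof, at the price of redoing by hand a covering step that the paper outsources to the Arzel\`a--Ascoli machinery it has already set up. The only stylistic remark is that you should state explicitly (as you implicitly use) that \(\bar\sfd_n(x',y')=\sfd_n(x',y')\) and \(\bar\sfd(x',y')=\sfd(x',y')\) for \((x',y')\in\Omega\times\Omega\), which is immediate from the definition of the extensions.
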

\begin{proof}
First of all, notice that \(\bar\Omega\) is locally compact (as it is a closed subset of \(\X\)).
Moreover, since we are assuming that \(\mathcal D_\alpha(\Omega)\) is not empty, we know from Lemma \ref{lem:closure_geod}
that \(\bar\Omega\) is pathwise connected, thus in particular it is connected. Therefore, Lemma \ref{lem:about_hemicpt}
ensures that \(\bar\Omega\) is hemicompact, so that Lemma \ref{lem:cor_Ascoli-Arzela} tells that any subsequence of
\((\bar\sfd_n)_{n\in\N}\) admits a subsequence \((\bar\sfd_{n_i})_{i\in\N}\) such that \(\bar\sfd_{n_i}\to\tilde\sfd\)
uniformly on compact subsets of \(\bar\Omega\times\bar\Omega\), for some distance \(\tilde\sfd\) on \(\bar\Omega\).
Notice that \(\tilde\sfd|_{\Omega\times\Omega}=\sfd\), thus we conclude that \(\tilde\sfd=\bar\sfd\) and accordingly
\(\bar\sfd_n\to\bar\sfd\) uniformly on compact subsets of \(\bar\Omega\times\bar\Omega\).
\end{proof}
\subsection*{The functional \texorpdfstring{\(J_\sfd\)}{Jd}}
Let \((\X,\sfD)\) be a locally compact, complete metric space and let \(\alpha>1\). Let \(\Omega\subseteq\X\) be an open set.
We denote by \(\mathfrak M(\Omega\times\Omega)\) the Banach space of all finite signed Borel measures on \(\Omega\times\Omega\), endowed with the total variation norm. Then we define
the space \(\mathcal B(\Omega)\) as
\[
\mathcal B(\Omega)\coloneqq\big\{\mu\in\mathfrak M(\Omega\times\Omega)\;\big|\;\mu\geq 0\big\}.
\]
Since \(\Omega\times\Omega\) is a locally compact, Hausdorff topological space, we know that \(\mathfrak M(\Omega\times\Omega)\) is the dual Banach space of \(C_0(\Omega\times\Omega)\).
Recall that \(C_0(\Omega\times\Omega)\) is defined as the closure in \(C(\Omega\times\Omega)\) of the space \(C_c(\Omega\times\Omega)\) of compactly-supported, real-valued continuous functions
on \(\Omega\times\Omega\). We then endow the space \(\mathcal B(\Omega)\) with the topology induced by the weak\(^*\) topology of \(\mathfrak M(\Omega\times\Omega)\cong C_0(\Omega\times\Omega)^*\).
\medskip

Given any distance \(\sfd\in\mathscr D_\alpha(\Omega)\), we define the functional \(J_\sfd\colon\mathcal B(\Omega)\to[0,+\infty]\) as
\begin{equation}\label{energy}
J_\sfd(\mu)\coloneqq\int\sfd(x,y)\,\d\mu(x,y)\quad\text{ for every }\mu\in\mathcal B(\Omega).
\end{equation}
\subsection*{The functional \texorpdfstring{\(F_\sfd\)}{Fd}}
Let \((\X,\sfD)\) be a metric space and let \(\alpha>1\). Let \(\Omega\subseteq\X\) be an open set.
Given any distance \(\sfd\in\mathcal D_\alpha(\Omega)\), we define the functional \(F_\sfd\colon\Lip(\Omega)\to[0,+\infty]\) as
\begin{equation}\label{F_d}
F_\sfd(u)\coloneqq\left\{\begin{array}{ll}
0\\
+\infty
\end{array}\quad\begin{array}{ll}
\text{ if }\Lip_\sfd(u)\leq 1,\\
\text{ otherwise.}
\end{array}\right.
\end{equation}
\section{Main result}
This section is entirely devoted to the main result of the present paper. For an account of the theory
of \(\Gamma\)-convergence, which we will need in the next statement, we refer to the monograph \cite{dalmaso}.
\begin{theorem}\label{thm:main_Gamma-conv}
Let \((\X,\sfD)\) be a locally compact, complete metric space and let \(\alpha>1\). Let \(\Omega\subseteq\X\) be an open
set satisfying \(\mathcal D_\alpha(\Omega)\neq\varnothing\). Let \((\sfd_n)_{n\in\N}\subseteq\mathcal D_\alpha(\Omega)\)
and \(\sfd\in\mathcal D_\alpha(\Omega)\) be given distances. Then the following conditions are equivalent:
\begin{itemize}
\item[\(\rm (i)\)] \(\sfd_n\to\sfd\) in \(\mathcal D_\alpha(\Omega)\).
\item[\(\rm (ii)\)] \(J_{\sfd_n}\overset\Gamma\to J_\sfd\) in \(\mathcal B(\Omega)\).
\item[\(\rm (iii)\)] \(L_{\bar\sfd_n}\overset\Gamma\to L_{\bar\sfd}\) in \(\Lip([0,1];\bar\Omega)\).
\item[\(\rm (iv)\)] \(F_{\sfd_n}\overset\Gamma\to F_\sfd\) in \(\Lip(\Omega)\).
\end{itemize}
\end{theorem}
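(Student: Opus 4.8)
The plan is to establish the four equivalences by proving a cycle of implications, using $\rm(i)$ as the central hub: namely $\rm(i)\Rightarrow\rm(ii)$, $\rm(i)\Rightarrow\rm(iii)$, $\rm(i)\Rightarrow\rm(iv)$, together with the three converses $\rm(ii)\Rightarrow\rm(i)$, $\rm(iii)\Rightarrow\rm(i)$, $\rm(iv)\Rightarrow\rm(i)$. The reason to route everything through $\rm(i)$ rather than chaining $\rm(ii)\Leftrightarrow\rm(iii)\Leftrightarrow\rm(iv)$ is that the uniform-on-compacta convergence of distances is the most concrete and flexible statement; it gives pointwise control that the $\Gamma$-convergences do not obviously transfer to one another directly. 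Throughout I would repeatedly use the two-sided bound $\alpha^{-1}\sfD\le\sfd_n\le\alpha\sfD$, which forces equi-Lipschitz estimates and allows Arzel\`a--Ascoli and diagonal arguments (exactly as in Lemmas \ref{lem:cor_Ascoli-Arzela}, \ref{lem:closure_geod}, \ref{lem:conv_on_closure}), and the representation formula for distances from Remark \ref{rmk:repr_dist}.

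For the forward implications I would argue as follows. For $\rm(i)\Rightarrow\rm(ii)$: the $\liminf$ inequality follows because $\sfd_n\to\sfd$ uniformly on compacta and $\mu$ is a finite measure, so by a truncation/Fatou argument $\int\sfd\,\d\mu\le\liminf\int\sfd_n\,\d\mu_n$ whenever $\mu_n\overset{*}{\rightharpoonup}\mu$ (one splits the integral over a large compact set where convergence is uniform and a small-mass tail); the $\limsup$ inequality (recovery sequence) is trivial by taking $\mu_n=\mu$ for all $n$ and using uniform convergence on the compact support portions. For $\rm(i)\Rightarrow\rm(iii)$: by Lemma \ref{lem:conv_on_closure}, $\bar\sfd_n\to\bar\sfd$ uniformly on compact subsets of $\bar\Omega\times\bar\Omega$; the $\liminf$ inequality for $L$ uses the definition of length as a sup of finite-partition sums (Remark \ref{rmk:L_d_lsc}) combined with equi-Lipschitzness to pass to the limit along a uniformly converging sequence of curves; again the recovery sequence is the constant sequence $\gamma_n=\gamma$, using that $L_{\bar\sfd_n}(\gamma)\to L_{\bar\sfd}(\gamma)$ for a fixed Lipschitz curve (dominated convergence on the partition sums, plus an equi-Lipschitz upper bound). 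For $\rm(i)\Rightarrow\rm(iv)$: the recovery sequence is again constant, $u_n=u$, since if $\Lip_\sfd(u)\le 1$ then $\Lip_{\sfd_n}(u)\to\Lip_\sfd(u)$ need not hold exactly but one can perturb $u$ to $(1-\eps)u$; more carefully, I would first handle a dense class of $u$ and then conclude; the $\liminf$ inequality says that if $u_n\to u$ locally uniformly and $\liminf F_{\sfd_n}(u_n)<\infty$ then $\Lip_{\sfd_n}(u_n)\le 1$ along a subsequence, which passes to $\Lip_\sfd(u)\le 1$ by uniform convergence of both $u_n\to u$ and $\sfd_n\to\sfd$ on compacta. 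Some care is needed with the recovery sequence in $\rm(iv)$ precisely because $F$ is $\{0,+\infty\}$-valued and a fixed $1$-Lipschitz-for-$\sfd$ function need not be $1$-Lipschitz for $\sfd_n$; the $(1-\eps)$-rescaling trick, combined with the fact that $\Lip_{\sfd_n}((1-\eps)u)\le(1-\eps)\limsup_n\Lip_{\sfd_n}(u)$ and a diagonal extraction, resolves this.

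For the converse implications, the key point — and the place where the paper's stated corrections matter — is that one cannot invoke compactness of $\mathcal D_\alpha(\Omega)$. Instead I would argue by contradiction combined with extraction: suppose $\rm(ii)$ (resp.\ $\rm(iii)$, $\rm(iv)$) holds but $\sfd_n\not\to\sfd$ uniformly on some compact $K\subseteq\Omega\times\Omega$. Using Lemma \ref{lem:cor_Ascoli-Arzela} applied on $\bar\Omega$ (which is hemicompact by Lemmas \ref{lem:about_hemicpt} and \ref{lem:closure_geod}), pass to a subsequence along which $\bar\sfd_{n_i}\to\bar\sfd_\infty$ uniformly on compacta for some geodesic distance $\bar\sfd_\infty$ on $\bar\Omega$ with $\alpha^{-1}\sfD\le\bar\sfd_\infty\le\alpha\sfD$, and with $\sfd_\infty\coloneqq\bar\sfd_\infty|_{\Omega\times\Omega}\ne\sfd$; note $\sfd_\infty$ restricted to $\Omega$ need not be a length distance, but $\bar\sfd_\infty$ is still geodesic on $\bar\Omega$. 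By the already-proved forward implications applied to this subsequence, $J_{\sfd_{n_i}}\overset{\Gamma}{\to}J_{\sfd_\infty}$ (resp.\ $L_{\bar\sfd_{n_i}}\overset{\Gamma}{\to}L_{\bar\sfd_\infty}$, $F_{\sfd_{n_i}}\overset{\Gamma}{\to}F_{\sfd_\infty}$) — here one needs versions of the forward implications that do not presuppose the limit is a length distance, which the arguments above supply verbatim since they only used the two-sided bound and uniform convergence. By uniqueness of $\Gamma$-limits we get $J_{\sfd_\infty}=J_\sfd$ (resp.\ $L_{\bar\sfd_\infty}=L_{\bar\sfd}$, $F_{\sfd_\infty}=F_\sfd$), and it remains to show each of these forces $\sfd_\infty=\sfd$ on $\Omega$, contradicting $\sfd_\infty\ne\sfd$. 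For $J$: testing against Dirac masses $\delta_{(x,y)}$ gives $\sfd_\infty(x,y)=\sfd(x,y)$ directly. For $F$: equality of the $\{0,\infty\}$ functionals means the $1$-Lipschitz classes coincide, hence by the sup representation of Remark \ref{rmk:repr_dist} the distances coincide. For $L$: equality of length functionals on $\bar\Omega$ together with the fact that both $\bar\sfd_\infty$ and $\bar\sfd$ are geodesic (Lemma \ref{lem:closure_geod}) gives $\bar\sfd_\infty(x,y)=\inf_\gamma L_{\bar\sfd_\infty}(\gamma)=\inf_\gamma L_{\bar\sfd}(\gamma)=\bar\sfd(x,y)$, so again they agree.

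The main obstacle I anticipate is the $\liminf$ half of $\rm(i)\Rightarrow\rm(ii)$ — lower semicontinuity of $\mu\mapsto\int\sfd\,\d\mu$ along weak$^*$-converging measures when the integrand itself varies. Because $\Omega$ is merely locally compact and the measures live on the open (non-compact) set $\Omega\times\Omega$, mass can escape to the boundary or to infinity, and $\sfd$ is continuous but not compactly supported; the clean move is to write $\sfd_n=(\sfd_n\wedge M)\vee 0$ plus a remainder, note $\sfd_n\wedge M\to\sfd\wedge M$ uniformly on compacta, approximate $\sfd\wedge M$ from below by functions in $C_c(\Omega\times\Omega)$, and combine weak$^*$ convergence against $C_0$ test functions with the nonnegativity of all integrands to get the inequality, then let $M\to\infty$. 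A secondary subtlety is making the forward implications genuinely independent of the length-space hypothesis on the limit, so that they can be re-applied to $\sfd_\infty$ in the contradiction argument; I would state the forward lemmas in that slightly more general form from the outset.
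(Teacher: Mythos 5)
Your overall architecture (everything routed through (i); converses by subsequence extraction, re-applying the forward implications to the subsequential limit, uniqueness of \(\Gamma\)-limits, and then recovering the distance from the functional via Dirac masses, geodesics, or Remark \ref{rmk:repr_dist}) is sound, and your treatment of \({\rm(i)}\Rightarrow{\rm(ii)}\) and of the two \(\Gamma\)-liminf inequalities is essentially correct. The genuine gaps are the two recovery-sequence constructions, which is exactly where the real work lies. For \({\rm(i)}\Rightarrow{\rm(iii)}\) you take the constant sequence \(\gamma^n=\gamma\) and claim \(L_{\bar\sfd_n}(\gamma)\to L_{\bar\sfd}(\gamma)\); this is false in general. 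The length is a supremum of partition sums and the number of partition points needed to saturate \(L_{\bar\sfd_n}\) is not uniformly bounded in \(n\), so locally uniform convergence of the distances yields only the liminf inequality (Remark \ref{rmk:L_d_lsc} is a lower semicontinuity, not a continuity, statement). Periodic homogenisation (cf.\ \cite{AB,AV}) produces \((\sfd_n)_{n\in\N}\subseteq\mathcal D_\alpha(\Omega)\) with \(\sfd_n\to\sfd\) uniformly but \(\limsup_n L_{\sfd_n}(\gamma)>L_\sfd(\gamma)\) for straight segments \(\gamma\); were the constant sequence a recovery sequence, \(\Gamma\)-convergence would collapse to pointwise convergence of the length functionals, which is precisely what fails. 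The paper instead builds a recovery sequence \emph{adapted to} \(\sfd_n\): it replaces \(\gamma\) on each interval \([(i-1)/r_n,i/r_n]\) by a \(\bar\sfd_n\)-geodesic with the same endpoints (Lemma \ref{lem:closure_geod} applied to \(\sfd_n\)), with \(r_n\to\infty\) chosen so slowly that \(r_n\sup_{K\times K}|\bar\sfd_n-\bar\sfd|\to 0\), whence \(L_{\bar\sfd_n}(\gamma^n)=\sum_i\bar\sfd_n(\gamma_{(i-1)/r_n},\gamma_{i/r_n})\leq L_{\bar\sfd}(\gamma)+r_n\sup_{K\times K}|\bar\sfd_n-\bar\sfd|\). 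Similarly, in \({\rm(i)}\Rightarrow{\rm(iv)}\) the \((1-\eps)\)-rescaling needs \(\limsup_n\Lip_{\sfd_n}(u)\leq 1\) whenever \(\Lip_\sfd(u)\leq 1\), which fails: on \(\Omega=(0,1)^2\) take \(\sfd=2\,\sfd_{\rm Eucl}\) and let \(\sfd_n\) be the Riemannian distance with conformal factor equal to \(1\) on \(B(x_0,r_n)\) and \(2\) outside \(B(x_0,2r_n)\) (smoothly interpolated), \(r_n\to 0\); then \(\sup_{\Omega\times\Omega}|\sfd_n-\sfd|\leq Cr_n\) and all distances lie in \(\mathcal D_2(\Omega)\), yet \(u(x)\coloneqq 2x_1\) satisfies \(\Lip_\sfd(u)=1\) while \(\Lip_{\sfd_n}(u)\geq 2\) for every \(n\), so neither the constant sequence nor any fixed rescaling (nor the unspecified dense-class reduction) yields admissible competitors. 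Here too the recovery sequence must be adapted to \(\sfd_n\): the paper uses the inf-convolutions \(u_{n,i}(x)\coloneqq\inf_{y\in K_i}\big(u(y)+\sfd_n(x,y)\big)\), which are automatically \(1\)-Lipschitz for \(\sfd_n\) and converge to \(u\) locally uniformly along a diagonal sequence.

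Two further remarks on your converse scheme. First, in \({\rm(iii)}\Rightarrow{\rm(i)}\) you assert that \(\bar\sfd_\infty\) is geodesic on \(\bar\Omega\) although \(\sfd_\infty\) need not be a length distance; Lemma \ref{lem:closure_geod} does not apply to \(\sfd_\infty\), so this needs an argument (take \(\bar\sfd_{n_i}\)-geodesics between the given endpoints, apply Arzel\`a--Ascoli, and pass to the limit in the partition sums using Lemma \ref{lem:conv_on_closure}) -- or one can avoid it altogether, as the paper does, by proving \(\tilde\sfd\leq\sfd\) and \(\sfd\leq\tilde\sfd\) directly from the \(\Gamma\)-limsup and \(\Gamma\)-liminf inequalities applied to geodesics of \(\bar\sfd\) and of \(\bar\sfd_n\), respectively. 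Second, your route for the converses -- forward implications stated for an arbitrary (not necessarily length) limit, uniqueness of \(\Gamma\)-limits, then testing \(J\) on Dirac masses and \(F\) via Remark \ref{rmk:repr_dist} -- is legitimate and, for \({\rm(ii)}\Rightarrow{\rm(i)}\), arguably cleaner than the paper's direct cut-off/localisation argument; moreover the paper's forward proofs of (iii) and (iv) never use that the limit is a length distance, so the generalised statements you invoke are indeed available. But as written the whole cycle stands or falls with the two adapted recovery-sequence constructions above, which your proposal is missing.
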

\begin{proof}
\ \\
\(\boldsymbol{{\rm (i)}\Longrightarrow{\rm (ii)}}\). The proof of this implication can be obtained by arguing exactly as for the
corresponding implication in \cite[Theorem 4.4]{esspasq}. Indeed, the argument therein is purely metric and does not rely
on the additional structure of the ambient space. Notice only that, in our framework, the existence of the cut-off functions
\((\eta_k)_k\) can be justified by exploiting the hemicompactness of \(\Omega\).
\medskip

\noindent\(\boldsymbol{{\rm (ii)}\Longrightarrow{\rm (i)}}\). Assuming \(J_{\sfd_n}\overset\Gamma\to J_\sfd\) in \(\mathcal B(\Omega)\),
we aim to prove that \(\sfd_n\to\sfd\) in \(\mathcal D_\alpha(\Omega)\).
Thanks to Lemma \ref{lem:cor_Ascoli-Arzela}, any subsequence of \((\sfd_n)_{n\in\N}\) admits a subsequence
\((\sfd_{n_i})_{i\in\N}\) such that \(\sfd_{n_i}\to\tilde\sfd\) uniformly on compact sets, for some distance \(\tilde\sfd\)
on \(\Omega\) with \(\alpha^{-1}\sfD\leq\tilde\sfd\leq\alpha\sfD\) on \(\Omega\times\Omega\). Notice that if we show that
\(\tilde\sfd=\sfd\), then we can conclude that \(\sfd_n\to\sfd\) in \(\mathcal D_\alpha(\Omega)\). Let \(\bar x,\bar y\in\Omega\) be fixed.
On the one hand,
\[
\sfd(\bar x,\bar y)=J_\sfd(\delta_{(\bar x,\bar y)})\leq\liminf_{n\to\infty}J_{\sfd_n}(\delta_{(\bar x,\bar y)})
\leq\lim_{i\to\infty}J_{\sfd_{n_i}}(\delta_{(\bar x,\bar y)})=\lim_{i\to\infty}\sfd_{n_i}(\bar x,\bar y)=\tilde\sfd(\bar x,\bar y)
\]
by the \(\Gamma\)-liminf inequality, where \(\delta_{(\bar x,\bar y)}\) stands for the Dirac delta measure at \((\bar x,\bar y)\).
On the other hand, by the \(\Gamma\)-limsup inequality we can find a sequence \((\mu_n)_{n\in\N}\subseteq\mathcal B(\Omega)\) that weakly\(^*\)
converges to \(\delta_{(\bar x,\bar y)}\) and satisfies \(J_\sfd(\delta_{(\bar x,\bar y)})=\lim_{n\to\infty}J_{\sfd_n}(\mu_n)\).
Given \(\varepsilon>0\), choose \(i_0\in\N\) such that
\[
\big|\tilde\sfd(\bar x,\bar y)-\sfd_{n_i}(\bar x,\bar y)\big|\leq\frac{\varepsilon}{2}\quad\text{ for every  }i\geq i_0.
\]
By the local compactness of \((\X,\sfD)\), there exists a radius \(r\in(0,\frac{\varepsilon}{4\alpha})\) such that
\[
K\coloneqq\big\{(x,y)\in\X\times\X\;\big|\;\sfD(x,\bar x),\sfD(y,\bar y)\leq r\big\}
\]
is compact and contained in \(\Omega\times\Omega\).
Fix any \(\eta\colon\Omega\times\Omega\to[0,1]\) continuous with \(\eta=0\) in \((\Omega\times\Omega)\setminus K\) and \(\eta(\bar x,\bar y)=1\).
The cut-off function \(\eta\) belongs to \(C_c(\Omega\times\Omega)\), thus \(\lambda_i\coloneqq\int\eta\,\d\mu_{n_i}\to\int\eta\,\d\delta_{(\bar x,\bar y)}=1\).
Now define the Borel probability measures \(\nu_i\) on \(\Omega\times\Omega\) as \(\nu_i\coloneqq\lambda_i^{-1}\eta\mu_{n_i}\). For \(i\geq i_0\) we can estimate
\[\begin{split}
\bigg|\int\sfd_{n_i}(x,y)\,\d\nu_i(x,y)-\tilde\sfd(\bar x,\bar y)\bigg|&\leq\bigg|\int\sfd_{n_i}(x,y)\,\d\nu_i(x,y)-\sfd_{n_i}(\bar x,\bar y)\bigg|+\big|\sfd_{n_i}(\bar x,\bar y)-\tilde\sfd(\bar x,\bar y)\big|\\
&\leq\int\big|\sfd_{n_i}(x,y)-\sfd_{n_i}(\bar x,\bar y)\big|\,\d\nu_i(x,y)+\frac{\varepsilon}{2}\\
&\leq\int\big(\sfd_{n_i}(x,\bar x)+\sfd_{n_i}(y,\bar y)\big)\,\d\nu_i(x,y)+\frac{\varepsilon}{2}\\
&\leq\alpha\int_K\big(\sfD(x,\bar x)+\sfD(y,\bar y)\big)\,\d\nu_i(x,y)+\frac{\varepsilon}{2}\\
&\leq 2\alpha r+\frac{\varepsilon}{2}<\varepsilon.
\end{split}\]
Hence, recalling that \(\eta\leq 1\) and thus \(\nu_i\leq\lambda_i^{-1}\mu_{n_i}\), we finally conclude that
\[
\tilde\sfd(\bar x,\bar y)\leq\varepsilon+\liminf_{i\to\infty}\int\sfd_{n_i}(x,y)\,\d\nu_i(x,y)\leq\varepsilon+\lim_{i\to\infty}\frac{1}{\lambda_i}J_{\sfd_{n_i}}(\mu_{n_i})
=\varepsilon+J_\sfd(\delta_{(\bar x,\bar y)})=\varepsilon+\sfd(\bar x,\bar y).
\]
Letting \(\varepsilon\to 0\), we deduce that \(\tilde\sfd(\bar x,\bar y)\leq\sfd(\bar x,\bar y)\) and thus \(\tilde\sfd=\sfd\). Therefore, item (i) is proved.
\medskip

\noindent\(\boldsymbol{{\rm (i)}\Longrightarrow{\rm (iii)}}\). Assuming \(\sfd_n\to\sfd\) in \(\mathcal D_\alpha(\Omega)\), we aim to prove that
\(L_{\bar\sfd_n}\overset\Gamma\to L_{\bar\sfd}\) in \(\Lip([0,1];\bar\Omega)\). In order to check the \(\Gamma\)-liminf inequality,
fix an arbitrary converging sequence \(\gamma^n\to\gamma\) in \(\Lip([0,1];\bar\Omega)\). Since the image of \(\gamma\) is a compact subset
of \(\bar\Omega\) and the sequence \((\gamma^n)_{n\in\N}\) converges to \(\gamma\) uniformly, we can find \(\bar n\in\N\) and a compact set
\(K\subseteq\bar\Omega\) such that \(\gamma^n_t\in K\) for every \(n\geq\bar n\) and \(t\in[0,1]\). We know from Lemma \ref{lem:conv_on_closure}
that \(\bar\sfd_n\to\bar\sfd\) uniformly on \(K\times K\). Given a partition \(0=t_0<t_1<\ldots<t_k=1\) of \([0,1]\), we thus have that
\(\bar\sfd_n(\gamma^n_{t_i},\gamma^n_{t_{i-1}})\to\bar\sfd(\gamma_{t_i},\gamma_{t_{i-1}})\) as \(n\to\infty\) for every \(i=1,\ldots,k\), so that
\[
\sum_{i=1}^k\bar\sfd(\gamma_{t_i},\gamma_{t_{i-1}})=\lim_{n\to\infty}\sum_{i=1}^k\bar\sfd_n(\gamma^n_{t_i},\gamma^n_{t_{i-1}})\leq\liminf_{n\to\infty}L_{\bar\sfd_n}(\gamma^n).
\]
By the arbitrariness of \(0=t_0<t_1<\ldots<t_k=1\), we deduce that
\(L_{\bar\sfd}(\gamma)\leq\liminf_{n\to\infty}L_{\bar\sfd_n}(\gamma^n)\).

Let us pass to the verification of the \(\Gamma\)-limsup inequality. Fix any curve \(\gamma\in\Lip([0,1];\bar\Omega)\).
Since the image \(K\) of \(\gamma\) is a compact subset of \(\bar\Omega\), we have that \(\sup_{K\times K}|\bar\sfd_n-\bar\sfd|\to 0\)
as \(n\to\infty\). Pick a sequence \((r_n)_{n\in\N}\subseteq\N\) such that \(r_n\to\infty\)
and 
\[
r_n\sup_{K\times K}|\bar\sfd_n-\bar\sfd|\to 0 \quad \mbox{as } n\to\infty.
\]
Given that \((\bar\Omega,\bar\sfd_n)\) is a geodesic space
by Lemma \ref{lem:closure_geod}, for any \(n\in\N\) we can find \(\gamma^n\in\Lip([0,1];\bar\Omega)\) such that
\({\gamma^n}|_{[(i-1)/r_n,i/r_n]}\) is a geodesic curve in \((\bar\Omega,\bar\sfd_n)\) and
\((\gamma^n_{(i-1)/r_n},\gamma^n_{i/r_n})=(\gamma_{(i-1)/r_n},\gamma_{i/r_n})\) for every \(i=1,\ldots,r_n\).
In particular, \(\gamma^n\) is a Lipschitz curve from \([0,1]\) to \((\bar\Omega,\sfD)\) whose Lipschitz constant does not exceed \(\alpha L_\sfD(\gamma)\).
Moreover, letting \(\lambda\geq 0\) be the Lipschitz constant of \(\gamma\) as a curve from \([0,1]\) to \((\bar\Omega,\sfD)\), we have
\(\sup_{t\in[0,1]}\sfD(\gamma^n_t,\gamma_t)\leq\frac{\lambda+\alpha L_\sfD(\gamma)}{r_n}\), thus \(\gamma^n\to\gamma\) uniformly. Finally,
\[\begin{split}
L_{\bar\sfd}(\gamma)&\geq\sum_{i=1}^{r_n}\bar\sfd(\gamma_{(i-1)/r_n},\gamma_{i/r_n})\geq\sum_{i=1}^{r_n}\bar\sfd_n(\gamma^n_{(i-1)/r_n},\gamma^n_{i/r_n})-r_n\sup_{K\times K}|\bar\sfd_n-\bar\sfd|\\
&=\sum_{i=1}^{r_n}L_{\bar\sfd_n}(\gamma^n|_{[(i-1)/r_n,i/r_n]})-r_n\sup_{K\times K}|\bar\sfd_n-\bar\sfd|=L_{\bar\sfd_n}(\gamma^n)-r_n\sup_{K\times K}|\bar\sfd_n-\bar\sfd|,
\end{split}\]
whence \(L_{\bar\sfd}(\gamma)\geq\limsup_{n\to\infty}L_{\bar\sfd_n}(\gamma^n)\) follows by letting \(n\to\infty\). Therefore, item (iii) is proved.
\medskip

\noindent\(\boldsymbol{{\rm (iii)}\Longrightarrow{\rm (i)}}\). Assuming \(L_{\bar\sfd_n}\overset\Gamma\to L_{\bar\sfd}\)
in \(\Lip([0,1];\bar\Omega)\), we aim to prove that \(\sfd_n\to\sfd\) in \(\mathcal D_\alpha(\Omega)\).
As before, it is sufficient to show that if \(\sfd_n\to\tilde\sfd\) uniformly on compact sets for some distance \(\tilde\sfd\)
on \(\Omega\), then \(\tilde\sfd=\sfd\). Let \(x,y\in\Omega\) be fixed. On the one hand, Lemma \ref{lem:closure_geod} ensures
that \((\bar\Omega,\bar\sfd)\) is a geodesic space, thus there exists a geodesic \(\gamma\colon[0,1]\to(\bar\Omega,\bar\sfd)\) with
\(\gamma_0=x\) and \(\gamma_1=y\). Chosen a sequence \((\gamma^n)_{n\in\N}\subseteq\Lip([0,1];\bar\Omega)\) such that \(\gamma^n\to\gamma\)
and \(L_{\bar\sfd_n}(\gamma^n)\to L_{\bar\sfd}(\gamma)\), one has \((\gamma^n_0,\gamma^n_1)\to(x,y)\) and
\[
\tilde\sfd(x,y)=\lim_{n\to\infty}\bar\sfd_n(\gamma^n_0,\gamma^n_1)\leq\lim_{n\to\infty}L_{\bar\sfd_n}(\gamma^n)=L_{\bar\sfd}(\gamma)=\bar\sfd(x,y)=\sfd(x,y).
\]
On the other hand, since each \((\bar\Omega,\bar\sfd_n)\) is a geodesic space again by Lemma \ref{lem:closure_geod}, for any \(n\in\N\) we can find
a geodesic \(\gamma^n\colon[0,1]\to(\bar\Omega,\bar\sfd_n)\) such that \(\gamma^n_0=x\) and \(\gamma^n_1=y\). Notice that the Lipschitz constant of
\(\gamma^n\) with respect to the distance \(\sfD\) cannot exceed \(\alpha^2\sfD(x,y)\), thus accordingly an application of the Arzel\`{a}--Ascoli theorem
provides us with a subsequence \((n_i)_{i\in\N}\subseteq\N\) and a curve \(\gamma\in\Lip([0,1];\bar\Omega)\) such that \(\gamma^{n_i}\to\gamma\) uniformly.
Notice that \(\gamma_0=x\) and \(\gamma_1=y\). Therefore, we have that
\[
\sfd(x,y)\leq L_{\bar\sfd}(\gamma)\leq\liminf_{i\to\infty}L_{\bar\sfd_{n_i}}(\gamma^{n_i})=\lim_{i\to\infty}\sfd_{n_i}(x,y)=\tilde\sfd(x,y)
\]
thanks to the \(\Gamma\)-liminf inequality. All in all, the identity \(\tilde\sfd=\sfd\) is proved, whence item (i) follows.
\medskip

\noindent\(\boldsymbol{{\rm (i)}\Longrightarrow{\rm (iv)}}\). Assuming \(\sfd_n\to\sfd\) in \(\mathcal D_\alpha(\Omega)\),
we aim to prove that \(F_{\sfd_n}\overset\Gamma\to F_\sfd\) in \(\Lip(\Omega)\). In order to check the \(\Gamma\)-liminf
inequality, fix a converging sequence \(u_n\to u\) in \(\Lip(\Omega)\). If \(F_\sfd(u)=0\), then there is nothing
to prove. If \(F_\sfd(u)=+\infty\), then we can find \(x,y\in\Omega\) such that \(u(x)-u(y)>\sfd(x,y)\). It follows
that there exists \(\bar n\in\N\) such that \(u_n(x)-u_n(y)>\sfd_n(x,y)\) for every \(n\geq\bar n\), thus in particular
\(\Lip_{\sfd_n}(u_n)>1\) for every \(n\geq\bar n\) and accordingly \(F_\sfd(u)=+\infty=\liminf_{n\to\infty}F_{\sfd_n}(u_n)\).

Let us pass to the verification of the \(\Gamma\)-limsup inequality. Fix any \(u\in\Lip(\Omega)\).
If \(F_\sfd(u)=+\infty\), then there is nothing to prove (since the sequence constantly equal to \(u\) is a recovery sequence).
Then we focus on the case where \(F_\sfd(u)=0\). Since \(\Omega\) is hemicompact, we can find an increasing sequence
\((K_i)_{i\in\N}\) of compact subsets of \(\Omega\) having the property that every compact subset of \(\Omega\) is contained
in \(K_i\) for some \(i\in\N\). For any \(i,n\in\N\), we define \(u_{n,i}\colon\Omega\to\R\) via inf-convolution as
\[
u_{n,i}(x)\coloneqq\inf_{y\in K_i}\big(u(y)+\sfd_n(x,y)\big)\quad\text{ for every }x\in\Omega.
\]
Since \(\Lip_{\sfd_n}(u(y)+\sfd_n(\cdot,y))\leq 1\) for every \(y\in K_i\), we deduce that \(u_{n,i}\in\Lip(\Omega)\)
and \(\Lip_{\sfd_n}(u_{n,i})\leq 1\), so that \(F_{\sfd_n}(u_{n,i})=0\). Given any \(x\in K_i\), we have that
\(u_{n,i}(x)=u(y_{n,i}(x))+\sfd_n(x,y_{n,i}(x))\) for some point \(y_{n,i}(x)\in K_i\), thus we can estimate
\[\begin{split}
u(x)&\geq u_{n,i}(x)=u(y_{n,i}(x))+\sfd_n(x,y_{n,i}(x))\geq u(x)-\sfd(x,y_{n,i}(x))+\sfd_n(x,y_{n,i}(x))\\
&\geq u(x)-\sup_{K_i\times K_i}|\sfd_n-\sfd|.
\end{split}\]
Given that for any \(i\in\N\) it holds that \(\sup_{K_i\times K_i}|\sfd_n-\sfd|\to 0\) as \(n\to\infty\),
we can extract a subsequence \((n_i)_{i\in\N}\) such that \(\sup_{K_i}|u-u_{n,i}|\leq\frac{1}{i}\) for
every \(i\in\N\) and \(n\geq n_i\). Define \((u_n)_{n\in\N}\) as
\[
u_n\coloneqq\left\{\begin{array}{ll}
u\\
u_{n,i}
\end{array}\quad\begin{array}{ll}
\text{ if }n<n_1,\\
\text{ if }n_i\leq n<n_{i+1}\text{ for some }i\in\N.
\end{array}\right.
\]
Given any \(K\subseteq\Omega\) compact and \(\varepsilon>0\), we can find \(j\in\N\) such that \(K\subseteq K_i\) and
\(\frac{1}{i}\leq\varepsilon\) hold for every \(i\geq j\), thus \(\sup_K|u-u_n|\leq\varepsilon\) for every \(n\geq n_j\).
Then \(u_n\to u\) uniformly on compact sets. Since \(F_\sfd(u)=0=\limsup_{n\to\infty}F_{\sfd_n}(u_n)\),
we showed that \((u_n)_{n\in\N}\) is a recovery sequence for \(u\).
\medskip

\noindent\(\boldsymbol{{\rm (iv)}\Longrightarrow{\rm (i)}}\). Assuming \(F_{\sfd_n}\overset\Gamma\to F_\sfd\) in \(\Lip(\Omega)\),
we aim to prove that \(\sfd_n\to\sfd\) in \(\mathcal D_\alpha(\Omega)\). As before, it suffices to show that if
\(\sfd_n\to\tilde\sfd\) uniformly on compact sets for some distance \(\tilde\sfd\) on \(\Omega\), then \(\tilde\sfd=\sfd\).
Then let \(x,y\in\Omega\) be fixed. On the one hand, the \(\Gamma\)-limsup inequality tells that for any \(u\in\Lip(\Omega)\)
with \(\Lip_\sfd(u)\leq 1\) we can find \((u_n)_{n\in\N}\subseteq\Lip(\Omega)\) converging to \(u\) with
\(\Lip_{\sfd_n}(u_n)\leq 1\), thus
\[
u(x)-u(y)=\lim_{n\to\infty}\big(u_n(x)-u_n(y)\big)\leq\lim_{n\to\infty}\sfd_n(x,y)=\tilde\sfd(x,y),
\]
whence (recalling Remark \ref{rmk:repr_dist}) it follows that
\[
\sfd(x,y)=\sup\Big\{u(x)-u(y)\;\Big|\;u\in\Lip(\Omega),\,\Lip_\sfd(u)\leq 1\Big\}\leq\tilde\sfd(x,y).
\]
On the other hand, we have \(\tilde\sfd(x,\cdot),\sfd_n(x,\cdot)\in\Lip(\Omega)\) and \(\Lip_{\sfd_n}(\sfd_n(x,\cdot))\leq 1\)
for every \(n\in\N\), thus the \(\Gamma\)-liminf inequality ensures that \(F_\sfd(\tilde\sfd(x,\cdot))\leq\liminf_{n\to\infty}F_{\sfd_n}(\sfd_n(x,\cdot))=0\),
whence it follows that \(\Lip_\sfd(\tilde\sfd(x,\cdot))\leq 1\) and thus \(\tilde\sfd(x,y)\leq\sfd(x,y)\). All in all, we have shown that \(\tilde\sfd=\sfd\).
\end{proof}
\section{Some counterexamples}
We denote by \(\sfd_{\rm Eucl}\) the Euclidean distance on \(\R^n\), i.e.\ \(\sfd_{\rm Eucl}(x,y)\coloneqq|x-y|\) for every \(x,y\in\R^n\).
\begin{lemma}\label{lem:counterex}
Consider the open square \(\Omega\coloneqq(0,1)^2\subseteq\R^2\) and \(\sfd\coloneqq 2\sfd_{\rm Eucl}|_{\Omega\times\Omega}\in\mathcal D_2(\Omega,\sfd_{\rm Eucl})\).
Then there exist an increasing sequence \((\sfd_n)_{n\in\N}\subseteq\mathcal D_2(\Omega,\sfd_{\rm Eucl})\)
and a distance \(\tilde\sfd\) on \(\Omega\) with \(\tilde\sfd\neq\sfd\) such that \(L_{\sfd_n}\overset\Gamma\to L_\sfd\)
in \(\Lip([0,1];\Omega)\) and \(\sfd_n\to\tilde\sfd\) uniformly on compact subsets of \(\Omega\times\Omega\).
\end{lemma}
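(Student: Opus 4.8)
The plan is to construct the sequence $(\sfd_n)_{n\in\N}$ by carving out a thin ``slit'' near one edge of the square and declaring that crossing it costs a lot — concretely, pick a point $p$ on the boundary $\partial\Omega$ (say $p=(1/2,0)$) and on the segment $S_n$ joining $p$ upward into $\Omega$ up to height $1/n$, one forces paths to either avoid a neighbourhood of $S_n$ or pay a detour. A clean way to realise this inside $\mathcal D_2(\Omega)$ is to define $\sfd_n$ as the length distance induced by a conformal factor $\rho_n\colon\Omega\to[1,2]$ which equals $2$ on a shrinking neighbourhood $U_n$ of $S_n$ and equals $1$ outside a slightly larger neighbourhood; then $\alpha^{-1}\sfd_{\rm Eucl}\le\sfd_n\le 2\sfd_{\rm Eucl}$ with $\alpha=2$ is immediate, $(\Omega,\sfd_n)$ is a length space by construction, and by choosing $U_n$ to shrink fast (area $\to 0$, and $U_n\cap S_m$ controlled) one can arrange $(\sfd_n)$ to be increasing. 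The limit $\tilde\sfd$ will be the length distance of the factor $\tilde\rho=2$ away from the segment $S=\{1/2\}\times(0,1)$ but which ``remembers'' the extra cost accumulated at $S$: in fact $\sfd_n\to\tilde\sfd$ pointwise and locally uniformly, where $\tilde\sfd$ is strictly smaller than $\sfd=2\sfd_{\rm Eucl}$ for pairs $x,y$ lying on opposite sides of $S$ close to $p$, because a limiting path can ``slide along'' $S$ near $p$ where the penalised region has pinched off to nothing.

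The key steps, in order, would be: (1) fix the geometry — the point $p\in\partial\Omega$, the vertical segments $S_n$ of height $1/n$, and nested neighbourhoods $V_n\subseteq U_n$ with $|U_n|\to 0$ and $U_{n+1}\subseteq U_n$; (2) define $\rho_n$ explicitly (e.g. a Lipschitz bump equal to $2$ on $V_n$, $1$ off $U_n$, valued in $[1,2]$) and set $\sfd_n(x,y):=\inf\{\int_0^1\rho_n(\gamma_t)|\dot\gamma_t|\,\dt : \gamma\in\Lip([0,1];\Omega),\gamma_0=x,\gamma_1=y\}$; verify $\sfd_n\in\mathcal D_2(\Omega)$ and that $\rho_{n+1}\ge\rho_n$ (hence $\sfd_n$ increasing) by taking $V_{n+1}\supseteq V_n\cap\{x_2<1/(n+1)\}$ appropriately — actually it is cleaner to make $\rho_n$ \emph{increasing} in $n$ by letting the penalised set grow while its ``reach'' toward $\partial\Omega$ recedes; (3) identify $\tilde\sfd:=\lim_n\sfd_n$ (monotone limit, locally uniform by Lemma~\ref{lem:cor_Ascoli-Arzela} applied to $\bar\Omega$), and show $\tilde\sfd\ne\sfd$ by exhibiting $x,y$ with $\tilde\sfd(x,y)<2|x-y|$ — take $x,y$ symmetric about the line $\{x_1=1/2\}$ and very close to $p$, and use paths that dip down near $p$ outside all the $V_n$; (4) prove $L_{\sfd_n}\overset\Gamma\to L_\sfd$ in $\Lip([0,1];\Omega)$: the $\Gamma$-$\limsup$ is trivial since $L_{\sfd_n}\le L_\sfd$ (because $\rho_n\le 2$) so the constant recovery sequence works, wait — we need $L_{\sfd_n}(\gamma)\to L_\sfd(\gamma)$, which holds as $\rho_n\uparrow$ pointwise a.e.\ to $2$ (since $|U_n|\to 0$) and by monotone/dominated convergence $\int\rho_n|\dot\gamma|\to\int 2|\dot\gamma|=L_\sfd(\gamma)$ for each fixed Lipschitz $\gamma$; for the $\Gamma$-$\liminf$, given $\gamma^n\to\gamma$ in $\Lip([0,1];\Omega)$, note the curves stay in a fixed compact $K\Subset\Omega$ on which $\rho_n\to 2$ uniformly (as $U_n\cap K=\varnothing$ eventually, since $U_n$ is pinched toward $p\in\partial\Omega$), whence $L_{\sfd_n}(\gamma^n)=L_{\sfd_{\rm Eucl}}^{\rho_n}(\gamma^n)\ge (2-\eps_n)L_{\sfd_{\rm Eucl}}(\gamma^n)$ and lower semicontinuity of Euclidean length gives $\liminf_n L_{\sfd_n}(\gamma^n)\ge 2L_{\sfd_{\rm Eucl}}(\gamma)=L_\sfd(\gamma)$.

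The crux — and the main obstacle — is step (3): making $\tilde\sfd\ne\sfd$ while \emph{simultaneously} keeping the penalised neighbourhoods $U_n$ disjoint from every fixed compact $K\Subset\Omega$ for $n$ large (needed for step (4)'s $\liminf$). These pull in opposite directions: to lower the distance between interior points $x,y$ one wants the ``shortcut region'' near $p$ to actually help interior geodesics, but that shortcut lives near $\partial\Omega$. The resolution is that $\tilde\sfd(x,y)$ is computed as an infimum over \emph{all} Lipschitz paths in $\Omega$, including ones that wander close to $\partial\Omega$; the point is that for the original $\sfd=2\sfd_{\rm Eucl}$ a path from $x$ to $y$ gains nothing by going near $p$, whereas in the limit $\tilde\sfd$ a path that hugs the pinch point $p$ pays factor $<2$ there, and by choosing $x,y$ near $p$ the length of such a detour is only slightly more than $|x-y|$ itself — so $\tilde\sfd(x,y)$ can be made $<2|x-y|$ by a quantitative estimate. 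One must check carefully that $\tilde\sfd$ really is the pointwise limit and is a genuine distance (triangle inequality, positivity — the latter from $\tilde\sfd\ge\sfd_n\ge\tfrac12\sfd_{\rm Eucl}$) but is \emph{not} a length distance: indeed $\tilde\sfd\le\sfd=2\sfd_{\rm Eucl}$ everywhere while the length distance induced by $\tilde\sfd$ would have to equal its own ``$\inf$ over paths'', and one shows the strict gap persists, so $\tilde\sfd\notin\mathcal D_2(\Omega)$, which is exactly the pathology the lemma advertises.
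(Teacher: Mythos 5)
There is a genuine gap, and it sits exactly at the point you yourself flag as the crux. A preliminary issue is that your definition of \(\rho_n\) is internally inconsistent: in step (2) you take \(\rho_n=2\) on the shrinking neighbourhood \(U_n\) of the slit and \(\rho_n=1\) outside, but steps (3)--(4) use the opposite convention (\(\rho_n\le 2\), \(\rho_n\uparrow 2\) a.e., and \(\rho_n=2\) on any fixed compact \(K\) once \(U_n\cap K=\varnothing\)). With the definition as literally written, \(L_{\sfd_n}\) \(\Gamma\)-converges to \(L_{\sfd_{\rm Eucl}}\), not to \(L_{2\sfd_{\rm Eucl}}=L_\sfd\), and the sequence is not increasing; so only the other reading (cheap factor \(1\) on \(U_n\), factor \(2\) elsewhere) is viable.

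Under that reading, however, the limit distance is exactly \(\sfd\), so the conclusion \(\tilde\sfd\neq\sfd\) fails. The saving that any curve can extract from the cheap region is controlled by the \emph{diameter} of \(U_n\): if \(\gamma\in\Lip([0,1];\Omega)\) joins \(x\) to \(y\) and meets \(\bar U_n\), with \(t_1,t_2\) the first and last such times, then, since \(\rho_n=2\) off \(U_n\) and \(\rho_n\ge 1\) everywhere,
\[
\int_0^1\rho_n(\gamma_t)|\dot\gamma_t|\,\dt\;\ge\;2|x-\gamma_{t_1}|+|\gamma_{t_1}-\gamma_{t_2}|+2|\gamma_{t_2}-y|\;\ge\;2|x-y|-{\rm diam}(U_n),
\]
while curves avoiding \(U_n\) cost at least \(2|x-y|\). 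Your \(U_n\) is a neighbourhood, of vanishing area and pinched towards the single boundary point \(p\), of a segment of height \(1/n\); so \({\rm diam}(U_n)\to 0\), hence \(\sfd_n(x,y)\ge 2|x-y|-{\rm diam}(U_n)\) for every \emph{fixed} pair, and \(\tilde\sfd=\sup_n\sfd_n=2\sfd_{\rm Eucl}=\sfd\). Taking \(x,y\) ``very close to \(p\)'' does not help: the pair must be fixed while the available saving tends to \(0\) with \(n\), so the ``quantitative estimate'' you defer is precisely the estimate that cannot hold (and, consistently, your final claim that \(\tilde\sfd\) is not a length distance would also fail, since \(\tilde\sfd=\sfd\)). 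The way out of the tension you identify is not to shrink the cheap region to a point but to keep it of fixed macroscopic horizontal extent while letting it recede to the boundary: in the paper \(\phi_n=1\) on the full-width strip \((0,1)\times(0,2^{-(n+1)}]\), which eventually misses every compact subset of \(\Omega\) (so the \(\Gamma\)-convergence of the length functionals goes through exactly as in your step (4)), yet for the fixed pair \(a=(\frac1{16},\frac18)\), \(b=(\frac{15}{16},\frac18)\) a path dipping into the strip gives \(\sfd_n(a,b)\le\frac{11}8<\frac74=\sfd(a,b)\) for all \(n\), whence \(\tilde\sfd\neq\sfd\). If instead you intended \(U_n\) to keep a fixed width \(w>0\) with only its height shrinking, the construction essentially becomes the paper's strip construction, but then the witnessing pair must be taken at the macroscopic scale \(w\), not near \(p\), and the missing estimate is exactly the one above.
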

\begin{proof}
Given any \(n\in\N\), fix a smooth function \(\phi_n\colon\Omega\to[1,2]\) satisfying
\[
\phi_n=2\;\text{ on }(0,1)\times[2^{-n},1),\quad\phi_n=1\;\text{ on }(0,1)\times(0,2^{-(n+1)}].
\]
We then define the smooth Riemannian metric \(\varphi_n\colon\Omega\times\R^2\to[0,+\infty)\)
as \(\varphi_n(x,v)\coloneqq\phi_n(x)|v|\) for every \(x\in\Omega\) and \(v\in\R^2\).
We denote by \(\sfd_n\colon\Omega\times\Omega\to[0,+\infty)\) the distance induced by \(\varphi_n\), i.e.
\[
\sfd_n(x,y)\coloneqq\inf\bigg\{\int_0^1\varphi_n(\gamma_t,\dot\gamma_t)\,\d t\;\bigg|\;
\gamma\in{\rm Lip}([0,1];\Omega),\,\gamma_0=x,\,\gamma_1=y\bigg\}\quad\text{ for every }x,y\in\Omega.
\]
Given that \(1\leq\phi_n\leq 2\), we have that \(\sfd_{\rm Eucl}\leq\sfd_n\leq\sfd\) in \(\Omega\times\Omega\).
Since \(\phi_n\) is continuous, we also know from \cite[Theorem 2.5]{DCP2} that
\(L_{\sfd_n}(\gamma)=\int_0^1\varphi_n(\gamma_t,\dot\gamma_t)\,\d t\) for every \(\gamma\in\Lip([0,1];\Omega)\) and thus
\((\Omega,\sfd_n)\) is a length space. In particular, it holds that \(\sfd_n\in\mathcal D_2(\Omega)\).
Since \((\sfd_n)_{n\in\N}\) is non-decreasing by construction, we have that the limit
\(\tilde\sfd(x,y)\coloneqq\lim_{n\to\infty}\sfd_n(x,y)\) exists for every \(x,y\in\Omega\). It can be readily
checked that \(\tilde\sfd\) is a distance satisfying \(\sfd_{\rm Eucl}\leq\tilde\sfd\leq\sfd\) in \(\Omega\times\Omega\).
In particular, \(\tilde\sfd\) is a continuous function and thus \(\sfd_n\to\tilde\sfd\) uniformly on compact subsets
of \(\Omega\times\Omega\). We claim that
\[
L_{\sfd_n}\overset\Gamma\to L_\sfd\quad\text{ in }\Lip([0,1];\Omega).
\]
In order to prove it, fix any converging sequence \(\gamma^n\to\gamma\) in \(\Lip([0,1];\Omega)\).
The image of \(\gamma\), which is a compact subset of \(\Omega\), is contained in \((0,1)\times(2\delta,1)\)
for some \(\delta\in\big(0,\frac{1}{2}\big)\). Therefore, we can find \(\bar n\in\N\) such that
\(\sup_{t\in[0,1]}|\gamma^n_t-\gamma_t|\leq\delta\) and \(2^{-n}\leq\delta\) holds for every \(n\geq\bar n\).
In particular, the image of \(\gamma^n\) is contained in \(\{\phi_n=2\}\) for every \(n\geq\bar n\),
so that \(L_{\sfd_n}(\gamma^n)=L_{2\sfd_{\rm Eucl}}(\gamma^n)=L_\sfd(\gamma^n)\) for every \(n\geq\bar n\).
Recalling that \(L_\sfd\) is lower semicontinuous (see Remark \ref{rmk:L_d_lsc}), we deduce that
\[
L_\sfd(\gamma)\leq\liminf_{n\to\infty}L_\sfd(\gamma^n)=\liminf_{n\to\infty}L_{\sfd_n}(\gamma^n),
\]
thus proving the \(\Gamma\)-liminf inequality. To prove the \(\Gamma\)-limsup inequality,
let \(\gamma\in\Lip([0,1];\Omega)\) be given. Arguing as before, we can find \(\bar n\in\N\)
such that \(L_{\sfd_n}(\gamma)=L_\sfd(\gamma)\) for every \(n\geq\bar n\), thus in particular
\(L_\sfd(\gamma)=\lim_{n\to\infty}L_{\sfd_n}(\gamma)\), which shows that the sequence constantly
equal to \(\gamma\) is a recovery sequence. All in all, we proved that \(L_{\sfd_n}\overset\Gamma\to L_\sfd\)
in \(\Lip([0,1];\Omega)\), as we claimed above.

In order to achieve the statement, it remains to check that \(\tilde\sfd\neq\sfd\). We denote
\(a\coloneqq\big(\frac{1}{16},\frac{1}{8}\big)\in\Omega\) and \(b\coloneqq\big(\frac{15}{16},\frac{1}{8}\big)\in\Omega\).
Given any \(n\in\N\) with \(n\geq 2\), we fix a curve \(\sigma^n\in\Lip([0,1];\Omega)\) such that
\[\begin{split}
\sigma^n|_{[0,\frac{1}{3}]}&\quad\text{ is a parameterisation of the interval }
\textstyle\big[a,\big(\frac{1}{16},\frac{1}{2^{n+1}}\big)\big],\\
\sigma^n|_{[\frac{1}{3},\frac{2}{3}]}&\quad\text{ is a parameterisation of the interval }
\textstyle\big[\big(\frac{1}{16},\frac{1}{2^{n+1}}\big),\big(\frac{15}{16},\frac{1}{2^{n+1}}\big)\big],\\
\sigma^n|_{[\frac{2}{3},1]}&\quad\text{ is a parameterisation of the interval }
\textstyle\big[\big(\frac{15}{16},\frac{1}{2^{n+1}}\big),b\big].
\end{split}\]
Using the fact that \(\phi_n=1\) on the image of \(\sigma^n|_{[\frac{1}{3},\frac{2}{3}]}\), and employing just the upper
bound \(\phi_n\leq 2\) on the images of \(\sigma^n|_{[0,\frac{1}{3}]}\) and \(\sigma^n|_{[\frac{2}{3},1]}\), for every
\(n\geq 2\) we can estimate
\[\begin{split}
\sfd_n(a,b)&\leq L_{\sfd_n}(\sigma^n)=L_{\sfd_n}(\sigma^n|_{[0,\frac{1}{3}]})
+L_{\sfd_n}(\sigma^n|_{[\frac{1}{3},\frac{2}{3}]})+L_{\sfd_n}(\sigma^n|_{[\frac{2}{3},1]})\\
&\leq 2\bigg(\frac{1}{8}-\frac{1}{2^{n+1}}\bigg)+\frac{7}{8}+2\bigg(\frac{1}{8}-\frac{1}{2^{n+1}}\bigg)<\frac{11}{8}.
\end{split}\]
It follows that \(\tilde\sfd(a,b)=\lim_{n\to\infty}\sfd_n(a,b)\leq\frac{11}{8}<\frac{7}{4}=\sfd(a,b)\),
which shows that \(\tilde\sfd\neq\sfd\).
\end{proof}
\begin{corollary}
There exists an open set \(\Omega\subseteq\R^2\) such that the space \(\mathcal D_2(\Omega)\neq\varnothing\) is not compact.
\end{corollary}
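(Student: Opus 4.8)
The plan is to use the square example from Lemma \ref{lem:counterex} as the witness set $\Omega$ and to combine it with the equivalence result of Theorem \ref{thm:main_Gamma-conv} in order to derive a contradiction from the assumption that $\mathcal D_2(\Omega)$ is compact. So take $\Omega\coloneqq(0,1)^2\subseteq\R^2$, $\sfD\coloneqq\sfd_{\rm Eucl}$, and $\sfd\coloneqq 2\sfd_{\rm Eucl}|_{\Omega\times\Omega}\in\mathcal D_2(\Omega)$. By Lemma \ref{lem:counterex} there is an increasing sequence $(\sfd_n)_{n\in\N}\subseteq\mathcal D_2(\Omega)$ and a distance $\tilde\sfd\neq\sfd$ on $\Omega$ with $L_{\sfd_n}\overset\Gamma\to L_\sfd$ in $\Lip([0,1];\Omega)$ and $\sfd_n\to\tilde\sfd$ uniformly on compact subsets of $\Omega\times\Omega$. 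In particular $\mathcal D_2(\Omega)\neq\varnothing$.

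First I would argue by contradiction: suppose $\mathcal D_2(\Omega)$ were compact (in its topology of uniform convergence on compact subsets of $\Omega\times\Omega$). Since this topology is metrisable — e.g.\ via $\sum_j 2^{-j}\min\{1,\sup_{K_j\times K_j}|\sfd-\sfd'|\}$ for an exhausting sequence of compacts $K_j$ of $\Omega\times\Omega$ — compactness is sequential compactness, so the sequence $(\sfd_n)_{n\in\N}$ would have a subsequence converging in $\mathcal D_2(\Omega)$ to some $\sfd_\infty\in\mathcal D_2(\Omega)$. But $\sfd_n\to\tilde\sfd$ uniformly on compact sets, hence so does every subsequence, so by uniqueness of limits $\sfd_\infty=\tilde\sfd$. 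In particular $\tilde\sfd$ would be a length distance belonging to $\mathcal D_2(\Omega)$, and the whole sequence $\sfd_n$ would converge to $\tilde\sfd$ in $\mathcal D_2(\Omega)$.

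Now I would invoke Theorem \ref{thm:main_Gamma-conv}: convergence $\sfd_n\to\tilde\sfd$ in $\mathcal D_2(\Omega)$ is equivalent, via the implication ${\rm(i)}\Rightarrow{\rm(iii)}$, to $L_{\bar\sfd_n}\overset\Gamma\to L_{\bar{\tilde\sfd}}$ in $\Lip([0,1];\bar\Omega)$, and in particular (restricting recovery sequences and test curves to those with image in $\Omega$, and using that $L_{\bar\sfd_m}$ agrees with $L_{\sfd_m}$ on such curves) it forces $L_{\tilde\sfd}$ to be the $\Gamma$-limit of $L_{\sfd_n}$ on $\Lip([0,1];\Omega)$. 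On the other hand we already know $L_{\sfd_n}\overset\Gamma\to L_\sfd$ on the same space. Since $\Gamma$-limits are unique, this yields $L_{\tilde\sfd}=L_\sfd$ as functionals on $\Lip([0,1];\Omega)$. But $\tilde\sfd$ and $\sfd$ are both length distances on $\Omega$, so each is recovered from its own length functional by $\sfd'(x,y)=\inf\{L_{\sfd'}(\gamma)\,|\,\gamma\in\Lip([0,1];\Omega),\,\gamma_0=x,\,\gamma_1=y\}$; hence $L_{\tilde\sfd}=L_\sfd$ forces $\tilde\sfd=\sfd$, contradicting $\tilde\sfd\neq\sfd$. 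This contradiction shows $\mathcal D_2(\Omega)$ is not compact, completing the proof.

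The main obstacle is the careful bookkeeping in the step where one passes from the $\Gamma$-convergence on $\Lip([0,1];\bar\Omega)$ provided by Theorem \ref{thm:main_Gamma-conv} to $\Gamma$-convergence on $\Lip([0,1];\Omega)$: one must check that the recovery sequences produced in the proof of ${\rm(i)}\Rightarrow{\rm(iii)}$ can be taken with image in $\Omega$ when the target curve $\gamma$ has image in $\Omega$ (true, since the geodesic interpolants stay in a small tube around a compact subset of $\Omega$), and conversely that the $\Gamma$-liminf inequality on $\Lip([0,1];\Omega)$ is inherited. Alternatively, and more cleanly, one avoids this by working directly: $\sfd_n\to\tilde\sfd$ in $\mathcal D_2(\Omega)$ already contradicts Lemma \ref{lem:counterex}'s assertion that $\sfd_n$ \emph{does not converge} to $\sfd$ combined with uniqueness of uniform-on-compacts limits — indeed if $\sfd_n\to\tilde\sfd$ in $\mathcal D_2(\Omega)$ then $\tilde\sfd\in\mathcal D_2(\Omega)$, but Lemma \ref{lem:counterex} exhibits $\tilde\sfd\neq\sfd$ with $\tilde\sfd$ a non-length distance only as the uniform limit; the genuinely needed input is just that $\tilde\sfd\notin\mathcal D_2(\Omega)$, which the computation $\tilde\sfd(a,b)<\sfd(a,b)$ together with a short argument showing $\tilde\sfd$ is not a length distance (a length distance with $\sfd_{\rm Eucl}\le\tilde\sfd\le 2\sfd_{\rm Eucl}$ dominating the lengths computed along the relevant curves would have to equal $2\sfd_{\rm Eucl}$ near the points where the lengths were computed) makes rigorous; I would present whichever of these two routes is shortest given the lemmas already available.
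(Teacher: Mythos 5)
Your proposal is correct, and its skeleton is the paper's: take \(\Omega=(0,1)^2\) with the increasing sequence \((\sfd_n)_{n\in\N}\) and the limit \(\tilde\sfd\) from Lemma \ref{lem:counterex}, use metrisability of the compact-open topology to turn compactness of \(\mathcal D_2(\Omega)\) into sequential compactness, and note that the limit of any convergent subsequence must coincide with \(\tilde\sfd\). Where you genuinely differ is in how the final contradiction is reached. The paper simply asserts that \(\tilde\sfd\notin\mathcal D_2(\Omega,\sfd_{\rm Eucl})\) (the fact that \(\tilde\sfd\) fails to be a length distance is mentioned in the introduction but never proved), whereas your main route deduces the contradiction from the hypothetical membership \(\tilde\sfd\in\mathcal D_2(\Omega)\): apply \({\rm(i)}\Rightarrow{\rm(iii)}\) of Theorem \ref{thm:main_Gamma-conv} with limit \(\tilde\sfd\), restrict from \(\Lip([0,1];\bar\Omega)\) to \(\Lip([0,1];\Omega)\) (your bookkeeping here is exactly the observation used in Proposition \ref{prop:converse_fails}, whose proof does not rely on this corollary, so there is no circularity), invoke uniqueness of \(\Gamma\)-limits to get \(L_{\tilde\sfd}=L_\sfd\) on \(\Lip([0,1];\Omega)\), and recover \(\tilde\sfd=\sfd\) from the length-space property, contradicting \(\tilde\sfd\neq\sfd\). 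This costs more machinery than the paper's one-line conclusion, but it is fully rigorous using only the results you cite. Your alternative route is essentially the paper's argument with the missing detail supplied: since \(\sfd_n\leq\tilde\sfd\) and, for each fixed \(\gamma\in\Lip([0,1];\Omega)\), \(L_{\sfd_n}(\gamma)=2L_{\sfd_{\rm Eucl}}(\gamma)\) for \(n\) large, any length distance below \(2\sfd_{\rm Eucl}\) and above all the \(\sfd_n\) would have to satisfy \(\tilde\sfd\geq 2\sfd_{\rm Eucl}=\sfd\), hence \(\tilde\sfd=\sfd\), contradicting \(\tilde\sfd(a,b)<\sfd(a,b)\); spelled out this way it shows \(\tilde\sfd\notin\mathcal D_2(\Omega)\) directly and is the shortest way to finish, so either of your two routes is acceptable.
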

\begin{proof}
Let \(\Omega\), \((\sfd_n)_{n\in\N}\), and \(\tilde\sfd\) be as in Lemma \ref{lem:counterex}. We claim that \((\sfd_n)_{n\in\N}\) does not admit
any converging subsequence in \(\mathcal D_2(\Omega,\sfd_{\rm Eucl})\), whence the non-compactness of \(\mathcal D_2(\Omega,\sfd_{\rm Eucl})\) follows.
We argue by contradiction: suppose that \(\sfd_{n_i}\to\hat\sfd\) in \(\mathcal D_2(\Omega,\sfd_{\rm Eucl})\), for some subsequence \((n_i)_{i\in\N}\)
and some limit distance \(\hat\sfd\in\mathcal D_2(\Omega,\sfd_{\rm Eucl})\). Given that \(\sfd_{n_i}\to\hat\sfd\) uniformly on compact subsets of
\(\Omega\times\Omega\), we conclude that \(\tilde\sfd=\hat\sfd\). This leads to a contradiction, since we have that \(\tilde\sfd\notin\mathcal D_2(\Omega,\sfd_{\rm Eucl})\).
\end{proof}
\begin{remark}\label{rmk:ex_for_all_open}{\rm
More generally, by suitably adapting the construction in Lemma \ref{lem:counterex} one can prove that \(\mathcal D_\alpha(\Omega)\)
is not compact whenever \(n\geq 2\), \(\Omega\subseteq\R^n\) is open, \(\alpha>1\), and \(\mathcal D_\alpha(\Omega)\neq\varnothing\).
\fr}\end{remark}
\begin{proposition}\label{prop:converse_fails}
Let \((\X,\sfD)\) be a locally compact, complete metric space and let \(\alpha>1\).
Let \(\Omega\subseteq\X\) be an open set such that \(\mathcal D_\alpha(\Omega)\neq\varnothing\).
Then the implication
\[
\sfd_n\to\sfd\;\text{ in }\mathcal D_\alpha(\Omega)\quad\Longrightarrow\quad
L_{\sfd_n}\overset\Gamma\to L_\sfd\;\text{ in }\Lip([0,1];\Omega)
\]
holds, while there exist examples where the converse implication fails.
\end{proposition}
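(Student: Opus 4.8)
The plan is to treat the two assertions separately. The forward implication I would obtain by rerunning the proof of ${\rm (i)}\Longrightarrow{\rm (iii)}$ from Theorem \ref{thm:main_Gamma-conv} \emph{directly on} $\Omega$ (rather than passing to the closure $\bar\Omega$), with one genuine modification; the failure of the converse is then immediate from Lemma \ref{lem:counterex}.

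For the $\Gamma$-liminf inequality the argument is literally the one in Theorem \ref{thm:main_Gamma-conv}: if $\gamma^n\to\gamma$ in $\Lip([0,1];\Omega)$, then the image of $\gamma$, being a compact subset of the open set $\Omega$, has a compact neighbourhood $K\subseteq\Omega$ by local compactness of $\X$, and uniform convergence puts $\gamma^n_t\in K$ for all $t$ and all large $n$; since $\sfd_n\to\sfd$ uniformly on $K\times K$, for each partition $0=t_0<\dots<t_k=1$ one gets $\sum_i\sfd(\gamma_{t_i},\gamma_{t_{i-1}})=\lim_n\sum_i\sfd_n(\gamma^n_{t_i},\gamma^n_{t_{i-1}})\le\liminf_n L_{\sfd_n}(\gamma^n)$, and a supremum over partitions gives $L_\sfd(\gamma)\le\liminf_n L_{\sfd_n}(\gamma^n)$. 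The real work is the $\Gamma$-limsup inequality: in the theorem the recovery sequence was built from $\bar\sfd_n$-geodesics, but $(\Omega,\sfd_n)$ is merely a length space, so I would build it from almost-geodesics and must check that these stay in $\Omega$. Precisely: fix $\gamma\in\Lip([0,1];\Omega)$ with (compact) image $K_0$, put $\lambda\coloneqq\Lip_\sfD(\gamma)$, and choose $\rho>0$ so small that the closed $\rho$-neighbourhood $K$ of $K_0$ is compact and contained in $\Omega$ (possible by local compactness of $\X$ and openness of $\Omega$). Set $\eps_n\coloneqq\sup_{K\times K}|\sfd_n-\sfd|\to 0$ and pick $r_n\in\N$ with $r_n\to\infty$ and $r_n\eps_n\to 0$. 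Since $(\Omega,\sfd_n)$ is a length space, for $i=1,\dots,r_n$ choose a Lipschitz curve $\sigma^{n,i}$ in $\Omega$ from $\gamma_{(i-1)/r_n}$ to $\gamma_{i/r_n}$ with $L_{\sfd_n}(\sigma^{n,i})\le\sfd_n(\gamma_{(i-1)/r_n},\gamma_{i/r_n})+\eps_n/r_n$, reparameterised to constant $\sfd_n$-speed on $[(i-1)/r_n,i/r_n]$; since $\sfD\le\alpha\sfd_n$, the $\sfD$-diameter of $\sigma^{n,i}$ is at most $\alpha^2\lambda/r_n+\alpha\eps_n/r_n$, hence $\le\rho$ for large $n$, so $\sigma^{n,i}$ stays in $K\subseteq\Omega$. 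The concatenation $\gamma^n$ (set $\gamma^n\coloneqq\gamma$ for the finitely many remaining $n$) then lies in $\Lip([0,1];\Omega)$, satisfies $\sup_{t\in[0,1]}\sfD(\gamma^n_t,\gamma_t)\to 0$, and, using additivity of length over subintervals together with $\sfd_n\le\sfd+\eps_n$ on $K_0\times K_0$,
\[
L_{\sfd_n}(\gamma^n)=\sum_{i=1}^{r_n}L_{\sfd_n}(\sigma^{n,i})\le\eps_n+\sum_{i=1}^{r_n}\sfd_n\big(\gamma_{(i-1)/r_n},\gamma_{i/r_n}\big)\le\eps_n+r_n\eps_n+L_\sfd(\gamma),
\]
which tends to $L_\sfd(\gamma)$; thus $\limsup_n L_{\sfd_n}(\gamma^n)\le L_\sfd(\gamma)$ and the forward implication is proved.

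For the failure of the converse it suffices to exhibit one instance, and Lemma \ref{lem:counterex} supplies it: with $(\X,\sfD)=(\R^2,\sfd_{\rm Eucl})$, $\alpha=2$ and $\Omega=(0,1)^2$, there exist $(\sfd_n)_{n\in\N}\subseteq\mathcal D_2(\Omega)$ and a distance $\tilde\sfd\ne\sfd\coloneqq 2\sfd_{\rm Eucl}|_{\Omega\times\Omega}$ with $L_{\sfd_n}\overset\Gamma\to L_\sfd$ in $\Lip([0,1];\Omega)$ while $\sfd_n\to\tilde\sfd$ uniformly on compact subsets of $\Omega\times\Omega$; since uniform convergence on compacta — in particular on the singletons $\{(x,y)\}$ — has unique limits, $\sfd_n$ does not converge to $\sfd$ in $\mathcal D_\alpha(\Omega)$. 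I expect the $\Gamma$-limsup step to be the only real obstacle: the constant sequence $\gamma^n\equiv\gamma$ is not in general a recovery sequence (uniform convergence of $\sfd_n$ on the image of $\gamma$ does not force $L_{\sfd_n}(\gamma)\to L_\sfd(\gamma)$), and the geodesics used in Theorem \ref{thm:main_Gamma-conv} are unavailable on $\Omega$, so the argument rests entirely on the almost-geodesic construction above and on the estimate guaranteeing that its pieces never leave $\Omega$.
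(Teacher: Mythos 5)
Your proof is correct, but for the forward implication it takes a genuinely different route from the paper. The paper obtains \(\sfd_n\to\sfd\Rightarrow L_{\sfd_n}\overset\Gamma\to L_\sfd\) in two lines: it invokes the already-proved implication \({\rm(i)}\Longrightarrow{\rm(iii)}\) of Theorem \ref{thm:main_Gamma-conv} (so \(L_{\bar\sfd_n}\overset\Gamma\to L_{\bar\sfd}\) on \(\Lip([0,1];\bar\Omega)\)) and then transfers the statement to \(\Omega\) by noting that any sequence \(\gamma^n\to\gamma\) in \(\Lip([0,1];\bar\Omega)\) with \(\gamma\in\Lip([0,1];\Omega)\) eventually lies in \(\Lip([0,1];\Omega)\), since the image of \(\gamma\) has positive \(\sfD\)-distance from \(\X\setminus\Omega\); in particular the recovery sequences produced on \(\bar\Omega\) serve, after finitely many modifications, as recovery sequences on \(\Omega\). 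You instead redo the \({\rm(i)}\Longrightarrow{\rm(iii)}\) construction directly on \(\Omega\), replacing the \(\bar\sfd_n\)-geodesics (unavailable in the length space \((\Omega,\sfd_n)\)) by almost-geodesics with error \(\eps_n/r_n\) per piece, and your estimates are sound: the extra error sums to \(\eps_n\), the comparison \(\sfd_n\le\sfd+\eps_n\) at the nodes costs \(r_n\eps_n\), and the \(\sfD\)-distance bound \(\alpha^2\lambda/r_n+\alpha\eps_n/r_n\) from each node yields the uniform convergence \(\gamma^n\to\gamma\). One small remark: the worry that the almost-geodesic pieces might leave \(\Omega\) is moot, since the length-space property of \((\Omega,\sfd_n)\) already provides curves with values in \(\Omega\); what your containment/diameter estimate actually buys is precisely the uniform convergence of the concatenations (and the harmless fact that they stay in the fixed compact \(K\)). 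What each approach buys: the paper's argument is shorter and reuses the closure machinery (Lemmas \ref{lem:closure_geod} and \ref{lem:conv_on_closure} through Theorem \ref{thm:main_Gamma-conv}), while yours is self-contained, never passes to \(\bar\Omega\), and makes visible that only local compactness and the length-space property of the \(\sfd_n\) are needed for this implication. The second half of your proof (failure of the converse via Lemma \ref{lem:counterex}, using that uniform convergence on compacta forces the pointwise limit \(\tilde\sfd\neq\sfd\)) coincides with the paper's.
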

\begin{proof}
Suppose \(\sfd_n\to\sfd\) in \(\mathcal D_\alpha(\Omega)\). We thus know from the implication \({\rm(i)}\Longrightarrow{\rm(iii)}\) of Theorem \ref{thm:main_Gamma-conv}
that \(L_{\bar\sfd_n}\overset\Gamma\to L_{\bar\sfd}\) in \(\Lip([0,1];\bar\Omega)\). The \(\Gamma\)-convergence \(L_{\sfd_n}\overset\Gamma\to L_\sfd\) in \(\Lip([0,1];\Omega)\)
then follows by just observing that if \(\gamma^n\to\gamma\) in \(\Lip([0,1];\bar\Omega)\) and \(\gamma\in\Lip([0,1];\Omega)\), then \((\gamma^n)_{n\geq\bar n}\subseteq\Lip([0,1];\Omega)\)
for some \(\bar n\in\N\). Conversely, let us now consider \(\Omega\), \((\sfd_n)_{n\in\N}\), \(\sfd\), and \(\tilde\sfd\) as in Lemma \ref{lem:counterex}. Then we have that
\(L_{\sfd_n}\overset\Gamma\to L_\sfd\) in \(\Lip([0,1];\Omega)\), but \(\sfd_n\) cannot converge to \(\sfd\) in \(\mathcal D_2(\Omega,\sfd_{\rm Eucl})\) because \(\sfd\neq\tilde\sfd\).
\end{proof}

\def\cprime{$'$} \def\cprime{$'$}

\end{document}